\newcounter{fig}
\newcommand{\mybic}{\author{Gianluca Cassese}
                     \address{Universit\`{a} Milano Bicocca}
                     \email{gianluca.cassese@unimib.it}
                     \curraddr{Department of Economics, Statistics and Management 
                                  Building U7, Room 2097, via Bicocca 
                                  degli Arcimboldi 8, 20126 Milano - Italy}}
\newcommand{\A}{\mathscr{A}} 
\newcommand{\F}{\mathscr{F}}
\newcommand{\R}{\mathbb{R}} 
\newcommand{\N}{\mathbb{N}}
\newcommand{\Bor}{\mathscr{B}}
\newcommand{\qtext}[1]{\quad\text{#1}\quad}
\newcommand{\qqtext}[1]{\qquad\text{#1}\qquad}
\newcommand{\abs}[1]{\vert #1\vert}
\newcommand{\Seq}[2][]{\mathfrak S_{#1}(#2)}
\newcommand{\SeqUp}[2][]
	{\mathfrak S^{\uparrow}_{#1}(#2)}
\newcommand{\SeqDn}[2][]
	{\mathfrak S^{\downarrow}_{#1}(#2)}
\newcommand{\net}[3]{\langle #1_{#2}\rangle_{#2\in #3} }
\newcommand{\seq}[2]{\net{#1}{#2}{\mathbb{N}}} 
\newcommand{\seqn}[1]{\seq{#1}{n}}
\newcommand{\norm}[1]{\Vert #1\Vert}
\newcommand{\sset}[1]{\mathds{1}_{\{#1\}}}
\newcommand{\emp}{\varnothing}
\newcommand{\tiref}[1]{(\textit{#1})}
\let\save@mathaccent\mathaccent
\newcommand*\if@single[3]{%
  \setbox0\hbox{${\mathaccent"0362{#1}}^H$}%
  \setbox2\hbox{${\mathaccent"0362{\kern0pt#1}}^H$}%
  \ifdim\ht0=\ht2 #3\else #2\fi
  }
\newcommand*\rel@kern[1]{\kern#1\dimexpr\macc@kerna}
\newcommand*\widebar[1]{\@ifnextchar^{{\wide@bar{#1}{0}}}{\wide@bar{#1}{1}}}
\newcommand*\wide@bar[2]{\if@single{#1}{\wide@bar@{#1}{#2}{1}}{\wide@bar@{#1}{#2}{2}}}
\newcommand*\wide@bar@[3]{%
  \begingroup
  \def\mathaccent##1##2{%
    \let\mathaccent\save@mathaccent
    \if#32 \let\macc@nucleus\first@char \fi
    \setbox\z@\hbox{$\macc@style{\macc@nucleus}_{}$}%
    \setbox\tw@\hbox{$\macc@style{\macc@nucleus}{}_{}$}%
    \dimen@\wd\tw@
    \advance\dimen@-\wd\z@
    \divide\dimen@ 3
    \@tempdima\wd\tw@
    \advance\@tempdima-\scriptspace
    \divide\@tempdima 10
    \advance\dimen@-\@tempdima
    \ifdim\dimen@>\z@ \dimen@0pt\fi
    \rel@kern{0.6}\kern-\dimen@
    \if#31
      \overline{\rel@kern{-0.6}\kern\dimen@\macc@nucleus\rel@kern{0.4}\kern\dimen@}%
      \advance\dimen@0.4\dimexpr\macc@kerna
      \let\final@kern#2%
      \ifdim\dimen@<\z@ \let\final@kern1\fi
      \if\final@kern1 \kern-\dimen@\fi
    \else
      \overline{\rel@kern{-0.6}\kern\dimen@#1}%
    \fi
  }%
  \macc@depth\@ne
  \let\math@bgroup\@empty \let\math@egroup\macc@set@skewchar
  \mathsurround\z@ \frozen@everymath{\mathgroup\macc@group\relax}%
  \macc@set@skewchar\relax
  \let\mathaccentV\macc@nested@a
  \if#31
    \macc@nested@a\relax111{#1}%
  \else
    \def\gobble@till@marker##1\endmarker{}%
    \futurelet\first@char\gobble@till@marker#1\endmarker
    \ifcat\noexpand\first@char A\else
      \def\first@char{}%
    \fi
    \macc@nested@a\relax111{\first@char}%
  \fi
  \endgroup
}
\newcommand {\FacSeq}[1]{\widebar{\Seq{#1}}}
\newcommand {\Meas}{\mathscr M}    
\newcommand {\PN}{\mathcal P(\N)}   
\newcommand {\Aa}{\mathfrak A}   
\newcommand {\G}{\mathscr G}   
\newcommand {\D}{$(\mathbf D)$}   
\newcommand {\Du}{$(\mathbf D_*)$}   
\newcommand {\CC}{$(CC)$}   
\newcommand {\DO}{$(\mathbf D_0)$}   
\begin{document} 
\title[Control measures]
{Control measures on Boolean algebras}
\mybic
\date \today 
\subjclass[2010]{
Primary: 06F05.
Secondary: 28A12, 28A60.} 

\keywords{
Absolute continuity, 
Boolean algebra,
Control measures, 
Countable chain property,
Maximal elements, 
Property \D.
}

\begin{abstract} 
In this paper we discuss the existence of a control
measure for a family of measures on a Boolean
algebra. We obtain a necessary and sufficient
condition and several related results, including
a new criterion for weak compactness for
additive set functions on an algebra of sets.
\end{abstract}

\maketitle

\begin{center}
{\it\Small This paper is dedicated to the memory 
of Fiamma Galgani.}
\end{center}

\section{Introduction.}
In 1947, Dorothy Maharam \cite{maharam} 
introduced and characterised the notion of 
measure algebra, namely a Boolean 
algebra $\A$ endowed with a measure that is 
strictly positive on $\A\setminus\{0\}$. 
Obtaining a characterization of measure 
algebras has since then become a major 
topic of research in measure theory.

In this paper we investigate the somehow related 
question of finding necessary and sufficient 
conditions for a set $\Meas$ of measures on 
$\A$ to admit a dominating or control measure, 
i.e. a measure $\nu$ such that
\begin{equation}
\lim_n\nu(a_n)=0
\qtext{implies}
\lim_n\mu(a_n)=0
\qquad
\mu\in\Meas.
\end{equation}

Although in the measure algebra literature 
domination has hitherto played a minor role, 
it has attracted much attention in analysis,
particularly in the study of vector measures 
in which, following Bartle, Dunford and Schartz 
\cite{bartle_dunford_schwartz}, if $F:\A\to X$ 
is an additive function with values in a vector 
space $X$, the existence of a control measure 
for the set $\Meas=\{x^*F:x^*\in X^*\}$ is 
particularly useful.

The problem addressed in this paper has a 
fairly natural translation in the language of 
vector lattices where the domination property 
is reformulated into the condition that a given 
set  belongs to some principal projection band. 
This general problem is fully settled in section 
\ref{sec Banach}. Nevertheless when it comes
to additive functions on a Boolean algebra the 
characterization so obtained is not very explicit 
about the role of the underlying algebra. In
order to obtain a more informative condition
involving $\A$ we introduce a hierarchy of different 
properties concerning $\Meas$, the \DO, \D\ 
and \Du\ properties.
One may consider each of these definitions
as a variant of the well known and historically 
important \CC\ condition. In sections \ref{sec DO} 
and \ref{sec D} we use the first two properties
to study monotone and additive functions 
respectively, while in section \ref{sec D*} we 
characterize weak compactness via the \Du\ 
property. Eventually, in section \ref{sec close} 
we construct a fairly general Boolean algebra
for which the \CC\ condition is necessary and 
sufficient to be a measure algebra. 

All of our results rely on two general lemmas 
of their own interest proved in section 
\ref{sec Boole} for general Boolean algebras.

\subsection{Notation}

In the sequel $\A$ is a Boolean algebra and, 
following \cite{sikorski}, we denote binary 
operations on $\A$ with set theoretic symbols.
Thus, $a\cap b$, $a\cup b$ and $a^c$ denote
meet, join and complementation; we also write 
$a\cap b^c$ as $a\setminus b$ and $1$ and $0$
for the greatest and the least elements.  By a measure 
on $\A$ we mean a function $\mu:\A\to\R_+$ 
such that
\begin{equation}
\mu(a\cap b)+\mu(a\cup b)
	=
\mu(a)+\mu(b)
\qquad
a,b\in\A.
\end{equation}
Of course $\A$ may well be regarded as an 
algebra of subsets of some given set $\Omega$, 
via Stone isomorphism. This remark makes
available several results originally established
for functions defined on an algebra of sets, at
least as long as one avoids infinite operations 
which are generally not preserved under Boolean 
isomorphisms, as is well known (see example 3.1 
in \cite{hewitt}). 

The importance of a Boolean algebra structure
emerges as we proceed to embed in an obvious 
way $\A$ into the Boolean algebra $\Seq\A$ of 
sequences from $\A$. The $n$-th coordinate 
projection of $\sigma\in\Seq\A$ will be denoted 
by $\sigma(n)$ and the range of $\sigma$ by 
$[\sigma]$. The Boolean operations on $\Seq\A$ 
will be denoted by $\sigma\wedge\tau$,
$\sigma\vee\tau$ and $\sigma\sim\tau$ defined
implicitly by letting
\begin{equation}
\big(\sigma\wedge\tau\big)(n)
=
\sigma(n)\cap\tau(n),
\ 
\big(\sigma\vee\tau\big)(n)
=
\sigma(n)\cup\tau(n)
\qtext{and}
\big(\sigma\sim\tau\big)(n)
=
\sigma(n)\setminus\tau(n)
\qquad n\in\N.
\end{equation}
$\SeqDn\A$ (resp. $\SeqUp\A$) will indicate the 
family of decreasing (resp. increasing) sequences 
on $\A$.

Denoting by $\Seq[0]\A$ the ideal of sequences
with finitely many non null elements, we obtain
the factorial Boolean algebra
$\FacSeq\A=\Seq\A/\Seq[0]\A$. 
The image of $\sigma\in\Seq\A$ under the 
canonical isomorphism of $\Seq\A$ into 
$\FacSeq\A$ will be denoted by $\widebar\sigma$. 
Boolean operations on $\FacSeq\A$ and on 
$\Seq\A$ will be indicated by the same symbols.
Every function $m:\A\to\R$ corresponds to a
function $\widebar m:\FacSeq\A\to\R$ via the
equation
\begin{equation}
\label{bar m}
\widebar m(\widebar\sigma)
=
\limsup_nm(\sigma(n))
\qquad
\widebar\sigma\in\FacSeq\A,\ 
\sigma\in\widebar\sigma.
\end{equation}

\section{Banach lattice preliminaries}
\label{sec Banach}

In this section we study the notion of domination
in the context of a given Banach lattice $X$ with 
order continuous norm. Terminology and notation 
are borrowed from \cite{aliprantis_burkinshaw}:
$I_A$ is the ideal generated by $A\subset X$,
$B_x$ is the band generated by $x\in X$
and $\widebar A$ denotes the norm closure 
of $A$.

\begin{lemma}
\label{lemma AL}
If $I\subset X$ is an ideal which contains no 
uncountable collection of non null, pairwise 
orthogonal elements then $I\subset B_z$ 
for some $z\in\widebar{I}$. If $X$ is an abstract 
$L^p$ space ($1\le p<\infty$), the converse 
is also true.
\end{lemma}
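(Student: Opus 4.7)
The plan is to prove the forward direction by the standard \emph{exhaustion} (or maximality) argument, and to prove the converse in the $L^p$ case by pushing everything to a concrete representation.

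For the forward implication, I would first reduce to positive elements by observing that $I$ is solid, so $y\in I$ if and only if $\abs y\in I$. Applying Zorn's lemma in $I_+\setminus\{0\}$ ordered by containment of pairwise disjoint families, I obtain a maximal collection $\{x_\alpha\}$ of non null, pairwise orthogonal positive elements of $I$; by hypothesis this collection is at most countable, so I can enumerate it as $\seqn x$ and, after rescaling, assume $\norm{x_n}\le 2^{-n}$. Order continuity of the norm, together with the fact that the partial sums $z_N=\sum_{n\le N}x_n$ are increasing and norm Cauchy, then gives a limit $z\in X_+$; since each $z_N\in I$, we have $z\in\cl I$.

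The next step is to verify $I\subset B_z$. Decomposing an arbitrary $y\in I_+$ as $y=y_1+y_2$ with $y_1\in B_z$ and $y_2\in B_z^{d}$ (this is possible because $X$ has order continuous norm, hence every band is a projection band), solidity of $I$ gives $y_1,y_2\in I_+$. Since $0\le x_n\le z$ for every $n$, the relation $y_2\wedge z=0$ forces $y_2\wedge x_n=0$, so if $y_2\ne 0$ we would enlarge the family $\seqn x$, contradicting maximality. Hence $y_2=0$ and $y\in B_z$; the general case follows from $y=y^+-y^-$.

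For the converse, when $X$ is an abstract $L^p$ space I would invoke the Kakutani representation $X\simeq L^p(\Omega,\Sigma,\mu)$. If $I\subset B_z$ with $z\in\cl I$, it suffices to show that $B_z$ itself has countable chain property. Writing $S=\{\abs z>0\}=\bigcup_k\{\abs z>1/k\}$, each set $\{\abs z>1/k\}$ has finite $\mu$-measure because $\int\abs z^p\,d\mu<\infty$, so $S$ is $\sigma$-finite. Any pairwise disjoint family of non null elements of $B_z$ has pairwise disjoint (measure-theoretic) supports contained in $S$, and a double application of the pigeonhole principle — first to restrict to a single $\{\abs z>1/k\}$, then to fix a uniform positive lower bound on the measures — rules out an uncountable such family.

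The delicate point is genuinely the first step: arranging that the element $z$ obtained as the sum of the countable disjoint family actually lies in $\cl I$ (rather than merely in the order completion) and that the band it generates captures all of $I$. This is exactly where order continuity of the norm is used, once in guaranteeing convergence of the rescaled series and once (via the projection band property) in the decomposition $y=y_1+y_2$; everything else is a bookkeeping exercise with maximality and solidity.
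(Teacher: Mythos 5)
Your proof is correct, and the forward direction is essentially the paper's argument: a maximal pairwise disjoint family in $I_+\setminus\{0\}$ obtained from Zorn's lemma is countable by hypothesis, one forms the normalised series $z=\sum_n2^{-n}\abs{x_n}/(1+\norm{x_n})\in\widebar{I}$, and maximality forces the disjoint complement of $z$ to meet $I$ trivially. (Two small remarks here: convergence of the series needs only norm completeness, not order continuity --- the latter is used, as you say, to make $B_z$ a projection band so that the decomposition $y=y_1+y_2$ exists; and your explicit band decomposition is exactly what the paper leaves implicit when it asserts that $x\in I\setminus B_z$ yields some $0<y\le\abs{x}$ orthogonal to every $x_n$.) Where you genuinely diverge is the converse. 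The paper stays abstract: for a pairwise orthogonal family $\{y_\alpha\}\subset I_+$ the elements $z\wedge y_\alpha$ are pairwise disjoint and dominated by $z$, so the defining identity of an abstract $L^p$ space gives $\norm{z}^p\ge\sum_\alpha\norm{z\wedge y_\alpha}^p$, whence $z\wedge y_\alpha=0$ --- and therefore $y_\alpha=0$, since $y_\alpha\in B_z$ --- for all but countably many $\alpha$. You instead pass to a Kakutani--Bohnenblust representation $L^p(\mu)$ and run a $\sigma$-finiteness/pigeonhole argument on supports. Both routes work; the paper's is a two-line computation avoiding the representation theorem, while yours makes the countable chain condition of $B_z$ concrete at the cost of a heavier external tool and of the (routine, but worth stating) identification of $B_z$ with the functions vanishing off $\supp z$, which relies on every element of $L^p$, $p<\infty$, having $\sigma$-finite support.
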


\begin{proof}
The family 
$\{A\subset I:
x\perp y\text{ for every }x,y\in A\}$
admits, by Zorn lemma,  a maximal element 
(relative to inclusion) which, by assumption, 
may be enumerated as $x_1,x_2,\ldots$. Then
\begin{equation}
\label{z}
z
=
\sum_n2^{-n}\frac{\abs{x_n}}{1+\norm{x_n}}
\in
\widebar{I}.
\end{equation}
If $x\in I\setminus B_z$ then there
exists $0<y\le\abs x$ orthogonal to 
$x_n$ for $n=1,2,\ldots$, contradicting the
maximality of $\{x_n:n\in\N\}$. Conversely, 
let $I\subset B_z$ for some 
$z\in X_+$ and let
$\{y_\alpha:\alpha\in\Aa\}
	\subset
I_+$ 
be pairwise orthogonal. If $X$ is an 
abstract $L^p$ space, 
$\norm z^p
	\ge
\sum_\alpha\norm{z\wedge y_\alpha}^p$ 
so that $y_\alpha\perp z$ -- and thus 
$\norm{y_\alpha}=0$ -- for all save 
countably many $\alpha\in\Aa$. 
\end{proof}

Thus for a set $A$ in a Banach lattice with order 
continuous norm, $A\subset B_x$ for some $x\in X$ 
if and only if $A\subset B_z$ for some $z$ of the 
form $z=\sum_na_n\abs{x_n}$, with 
$x_1,x_2,\ldots\in A$. In the setting of countably 
additive set functions on a $\sigma$ algebra of
sets this claim was proved by Halmos and 
Savage \cite[Lemma 7]{halmos_savage} (but 
see also Walsh \cite[Lemma 1]{walsh}) while 
its proof in the finitely additive case was 
given in \cite[Theorem 2]{JMAA_2016}. A 
similar property has also been studied recently 
by Lipecki \cite{lipecki_2018} under the name
of band domination.

Although Lemma \ref{lemma AL} provides a clear
answer to the question of a dominating element in
several interesting situations, in the case of a family
of additive set functions it is not particularly informative 
concerning the role underlying family of sets. This 
notwithstanding, Lemma \ref{lemma AL} provides 
a first result on measure algebras, at least in a rather
special case.

\begin{corollary}
Let $A\subset X$ and let $\A$ be the algebra 
of subsets of $A$ generated by the order intervals 
$(0,\abs a]$ with $a\in A$. If $\A$ is a measure algebra 
then $A\subset B_z$ for some $z\in\widebar{I_A}$.
\end{corollary}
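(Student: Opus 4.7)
The plan is to invoke Lemma \ref{lemma AL} with $I=I_A$: once we verify that $I_A$ admits no uncountable family of pairwise orthogonal non-zero elements, the lemma produces $z\in\overline{I_A}$ with $I_A\subset B_z$, and the desired conclusion $A\subset B_z$ follows from $A\subset I_A$.

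I would establish the orthogonality CCC of $I_A$ by contradiction. Suppose $\{y_\alpha\}_{\alpha\in\Aa}$ is an uncountable pairwise orthogonal family of non-zero elements of $I_A$. Because each $y_\alpha$ lies in the ideal generated by $A$, there exist $x_{\alpha,1},\dots,x_{\alpha,n_\alpha}\in A$ and $\lambda_\alpha\ge 0$ with $|y_\alpha|\le\lambda_\alpha\sum_i|x_{\alpha,i}|$. The Riesz space inequality $|y_\alpha|\wedge\sum_i|x_{\alpha,i}|\le\sum_i(|y_\alpha|\wedge|x_{\alpha,i}|)$, together with $y_\alpha\neq 0$, forces $|y_\alpha|\wedge|x_{\alpha,i_\alpha}|\neq 0$ for at least one index $i_\alpha$; fix such an index and write $x_\alpha:=x_{\alpha,i_\alpha}\in A$. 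The goal is then to produce from the witnesses $\{x_\alpha\}$ an uncountable pairwise disjoint family of non-empty members of $\A$. This would contradict the countable chain condition enjoyed by every measure algebra: a strictly positive additive measure $\mu$ on $\A$ cannot accommodate uncountably many pairwise disjoint non-null sets, since for some $n$ uncountably many of them would have $\mu$-measure exceeding $1/n$ and the total mass $\mu(A)$ is finite.

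The main obstacle is to ensure simultaneously the non-emptiness and pairwise disjointness of the candidates $E_\alpha:=(0,|x_\alpha|]\cap A\in\A$. The orthogonality of the $y_\alpha$'s does not immediately transfer to the witnesses $x_\alpha$ themselves, and the $E_\alpha$'s may be empty if $A$ contains no positive element below $|x_\alpha|$. My strategy here is to pass to the pairwise orthogonal band projections $P_\alpha$ onto $B_{y_\alpha}$, which exist in any Banach lattice with order continuous norm: by construction $P_\alpha x_\alpha\neq 0$, and because band projections are lattice homomorphisms the projected witnesses are pairwise orthogonal in $X$ while still satisfying $|P_\alpha x_\alpha|\le|x_\alpha|$. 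Transferring this orthogonality into disjointness of appropriately refined members of $\A$, via the relation between the generators $E_{x_\alpha}$ and the decomposition $x_\alpha=P_\alpha x_\alpha+(I-P_\alpha)x_\alpha$, is the crucial technical step, after which the CCC of $\A$ delivers the required contradiction.
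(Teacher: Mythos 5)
Your overall strategy --- verify the chain condition required by Lemma \ref{lemma AL} by playing an uncountable orthogonal family against the strict positivity of a measure on $\A$ --- is the right one, but your execution stops exactly where the work is. The step you yourself label ``the crucial technical step'' (turning the orthogonality of the projected witnesses $P_\alpha x_\alpha$ into an uncountable family of pairwise disjoint, \emph{nonempty} members of $\A$) is not carried out, and as described it cannot be completed in general: $P_\alpha x_\alpha$ lives in $X$, not in $A$, while $\A$ contains only subsets of $A$ generated by the traces $(0,\abs a]\cap A$ with $a\in A$, so nothing in your construction attaches an element of $\A$ to $P_\alpha x_\alpha$. Worse, the witnesses $x_\alpha$ may all coincide (take $A$ a single element, so that $\A$ reduces to $\{\emp,A\}$ and is trivially a measure algebra); in that configuration there simply is no uncountable disjoint family in $\A$ to be found, and the contradiction must instead come from order continuity of the norm --- an order interval $[0,\abs x]$ cannot contain an uncountable disjoint family of nonzero elements --- a tool your argument never invokes. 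A case split on whether the witness map $\alpha\mapsto x_\alpha$ has an uncountable fiber is therefore unavoidable, and only one branch has anything to do with the measure algebra.

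For comparison, the paper sidesteps the ideal $I_A$ at this stage entirely: it takes a pairwise disjoint family $\{x_\alpha\}$ lying in $A\setminus\{0\}$ itself, notes that the corresponding order intervals $(0,\abs{x_\alpha}]$ are then automatically pairwise disjoint elements of $\A$, and gets countability from $\mu(A)\ge\sum_\alpha\mu\big((0,\abs{x_\alpha}]\big)$ before citing Lemma \ref{lemma AL}. You correctly observed that Lemma \ref{lemma AL} literally asks for the chain condition on the ideal $I_A$ rather than on $A$, so some bridge is needed in any complete write-up; but your proposal replaces the paper's one-line verification with a strictly harder claim and then leaves precisely that claim unproved.
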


\begin{proof}
Let the finitely additive probability $\mu$ on $\A$
be strictly positive on $\A\setminus\{\emp\}$ 
and $\{x_\alpha:\alpha\in\Aa\}$ a disjoint family
in $A\setminus\{0\}$. Then the intervals 
$(0,\abs{x_\alpha}]$ are pairwise disjoint so that
$
\mu(A)\ge\sum_\alpha\mu\big((0,\abs{x_\alpha}]\big)
$
and $\Aa$ is countable. The claim
follows from Lemma \ref{lemma AL}.
\end{proof}

\section{Boolean algebra preliminaries}
\label{sec Boole}

In this section we shall prove two useful lemmas 
on Boolean algebras%
\footnote{
The results of this section may be proved in more
general structures than Boolean algebras.
}
.

\begin{lemma}
\label{lemma Boole1}
Let a Boolean algebra $\A$ with each countable 
subset admitting an upper bound. Let 
$\emp\ne\G\subset\F\subset\A$ 
be such that
(a) 
$0\notin\F$,
(b) 
$x\in\F$ and $y\ge x$ imply $y\in\F$ and
(c)
any family
$\{x_\alpha:\alpha\in\Aa\}\subset\G$
with $x_\alpha\wedge x_{\alpha'}\notin\F$
when $\alpha\ne\alpha'$ is at most countable.
Then for some $x_0\in\F$
\begin{equation}
x\sim x_0\notin\G
\qquad
x\in\G.
\end{equation}

\end{lemma}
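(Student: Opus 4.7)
The plan is to apply a Zorn's lemma argument to produce a maximal ``antichain modulo $\F$'' inside $\G$, and then use the hypothesis on countable upper bounds to aggregate it into the desired $x_0$.

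\textbf{Step 1: Maximal family.} Consider the collection $\mathcal C$ of subsets $B\subset\G$ with the property that $x\cap x'\notin\F$ whenever $x,x'\in B$ are distinct. Since $\G\ne\emp$, any singleton $\{x\}$ with $x\in\G$ belongs to $\mathcal C$ (the condition is vacuous), so $\mathcal C$ is nonempty. Unions of chains in $\mathcal C$ lie again in $\mathcal C$ because any two distinct elements of the union already sit in a common member of the chain. Zorn's lemma then delivers a maximal $B^*\in\mathcal C$. Hypothesis (c) forces $B^*$ to be at most countable, so I enumerate it as $B^*=\{x_n:n\in\N\}$ (padding with repetitions if finite).

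\textbf{Step 2: Construction of $x_0$.} The hypothesis that every countable subset of $\A$ admits an upper bound gives some $x_0\in\A$ with $x_0\ge x_n$ for every $n$. Fixing any $n$, we have $x_n\in\G\subset\F$ and $x_0\ge x_n$, so property (b) yields $x_0\in\F$.

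\textbf{Step 3: The conclusion by contradiction.} Assume towards contradiction that there is $x\in\G$ with $y:=x\setminus x_0\in\G$. Since $y\in\G\subset\F$ and $0\notin\F$ by (a), we have $y\ne 0$. For every $n$, the inequality $x_n\le x_0$ gives
\begin{equation}
y\cap x_n=(x\setminus x_0)\cap x_n\le x_n\setminus x_0=0,
\end{equation}
so $y\cap x_n=0\notin\F$. In particular $y\ne x_n$ for every $n$ (else $y=y\cap x_n=0$, contradicting $y\ne 0$). Hence $B^*\cup\{y\}$ is a strictly larger member of $\mathcal C$, contradicting the maximality of $B^*$. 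Therefore $x\setminus x_0\notin\G$ for every $x\in\G$, as required.

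\textbf{Main obstacle.} The argument is essentially a maximal disjointness argument modified to live ``modulo $\F$''. The only subtle point is making sure the new candidate $y$ is genuinely new and genuinely compatible with the antichain condition: this is where properties (a) and (b), together with $y\in\G$, are critical. Aside from that, the steps are routine once one sets up the Zorn framework correctly and uses the countable-upper-bound hypothesis to absorb $B^*$ into a single element.
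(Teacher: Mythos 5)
Your proof is correct, but it follows a genuinely more direct route than the paper's. You run Zorn's lemma on antichains modulo $\F$ inside $\G$ itself: a maximal family $B^*\subset\G$ with pairwise meets outside $\F$, which (c) forces to be countable, and whose upper bound $x_0$ lies in $\F$ by (b); the punchline is that any $y=x\setminus x_0\in\G$ would satisfy $y\cap x_n\le x_n\setminus x_0=0\notin\F$ and so could be adjoined to $B^*$. The paper instead fixes a choice function $\Gamma$ selecting an upper bound for each sequence in $\Seq\A$, considers pairs $(x,\sigma)\in\G\times\Seq\G$ with $x\sim\Gamma(\sigma)\in\G$, takes a \emph{maximal chain} $\Aa_0$ of such pairs under the order $(y,\tau)\succ(x,\sigma)$ iff $\{x\}\cup[\sigma]\subset[\tau]$, and deduces countability of $\Aa_0$ because the consecutive differences $x\sim\Gamma(\sigma)$ form an antichain modulo $\F$; the element $x_0$ is then $\Gamma(\sigma_0)$ for the sequence $\sigma_0$ enumerating $\bigcup_{(x,\sigma)\in\Aa_0}[\sigma]$, and maximality of the chain gives the conclusion. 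Both arguments share the skeleton (maximality, countability via (c), aggregation via the countable-upper-bound hypothesis), but yours eliminates the two-level bookkeeping with pairs and the choice function, and needs no separate treatment of the degenerate case ($\Aa=\emp$ in the paper, versus your vacuously nonempty $\mathcal C$). What the paper's formulation buys is chiefly structural: the chain-of-pairs device is reused almost verbatim in the proof of Lemma \ref{lemma Boole2}, where a linearly ordered family is the given object and an antichain must be manufactured from consecutive differences; your antichain argument is the cleaner choice for Lemma \ref{lemma Boole1} taken on its own. All the delicate points in your write-up check out: $B^*\ne\emp$ because singletons of $\G$ lie in $\mathcal C$, so (b) really does apply to give $x_0\in\F$, and the observation that $y\ne 0$ (from $y\in\G\subset\F$ and (a)) is exactly what guarantees $B^*\cup\{y\}$ is strictly larger.
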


\begin{proof}
Let $\Gamma$ be a choice function associating 
each $\sigma\in\Seq\A$ with an upper bound of
$[\sigma]$. If 
$\sigma(1)\in\F$ then,
$\Gamma(\sigma)
	\ge
\sigma(1)
$
implies $\Gamma(\sigma)\in\F$; if 
$x\in[\sigma]$ then 
$x\sim\Gamma(\sigma)\notin\G$.
Define
\begin{equation}
\Aa
=
\big\{(x,\sigma)
\in
\G\times\Seq{\G}:
x\sim\Gamma(\sigma)\in\G
\big\}.
\end{equation}
If $\Aa$ is empty, the claim is trivial. Otherwise, 
write $(y,\tau)\succ (x,\sigma)$ to indicate that 
$\{x\}\cup[\sigma]\subset[\tau]$. Let $\Aa_0\subset\Aa$
be a maximal, linearly $\succ$ ordered subset. 
If $(x,\sigma),(y,\tau)\in\Aa_0$ and, say, 
$(y,\tau)\succ(x,\sigma)$ 
then 
$
(y\sim\Gamma(\tau))
\wedge
(x\sim\Gamma(\sigma))
	\le
x\sim\Gamma(\tau)
	\notin
\F
$.
Thus, the collection
$\{x\sim\Gamma(\sigma):
(x,\sigma)\in\Aa_0\}
	\subset
\G$
is such that the meet of any two elements 
in it does not belong to $\F$ and, by property 
\tiref c, $\Aa_0$ must be countable. Choose
$\sigma_0\in\Seq{\G}$ such that
$[\sigma_0]
=
\bigcup_{(x,\sigma)\in\Aa_0}[\sigma]$
and set 
$x_0=\Gamma(\sigma_0)\in\F$. 
Then, $x\sim x_0\notin\G$ for all $x$
such that $(x,\sigma)\in\Aa_0$. If
$y_0\sim x_0\in\G$
for some $y_0\in\G$, this would imply
$(y_0,\sigma_0)\notin\Aa_0$ and
$(y_0,\sigma_0)\succ(x_0,\sigma)$
for all $(x,\sigma)\in\Aa_0$, 
contradicting the maximality of $\Aa_0$.
\end{proof}

If e.g. $\G=\F=\{x\in\A:\phi(x)>0\}\ne\emp$ with
$\phi:\A\to\R$ an increasing function with
$\phi(0)\le 0$ and $\phi(a)\le\phi(b)+\phi(a\sim b)$, 
then, under the conditions of the Lemma, $\phi$ 
admits a maximum.

We shall make use of property \tiref{c} of Lemma 
\ref{lemma Boole1} sufficiently often to justify
referring to that condition by saying that $\G$
is sparse in $\F$. Properties \tiref{a} and \tiref{b}
imply that writing
\begin{equation}
\label{>F}
y>_\F y
\qtext{whenever}
y\ge x
\qtext{and}
y\sim x\in\F
\end{equation}
implicitly defines an asymmetric partial order.

\begin{remark}
Although in general a Boolean algebra $\A$ may
fail to satisfy the condition on the existence of
upper bounds for countable subsets stated in 
Lemma \ref{lemma Boole1}, this property holds
in $\FacSeq\A$. In fact, if 
$\{\widebar\sigma_n:n\in\N\}
\subset\FacSeq\A$
and if $\sigma_n\in\widebar\sigma_n$
for each $n\in\N$, let $\sigma,\tau\in\Seq\A$ 
be defined via
\begin{equation}
\upsilon(n)
=
\bigcup_{j\le n}\sigma_j(n)
\qtext{and}
\tau(n)
=
\bigcap_{j\le n}\sigma_j(n)
\qquad
n\in\N.
\end{equation}
Then $\widebar\upsilon$ is an upper bound and
$\widebar\tau$ a lower bound for 
$\{\widebar\sigma_n:n\in\N\}$.
\end{remark}

\begin{lemma}
\label{lemma Boole2}
Let $\F\subset\A$ be sparse in itself and satisfy
properties (a) and (b) of Lemma \ref{lemma Boole1}.
Any $\G\subset\A$ linearly $>_\F$ ordered either 
admits an $>_\F$ maximum (resp. minimum) or a 
countable subset $\G_0$ having the same upper 
(res. lower) $\ge$ bounds as $\G$.
\end{lemma}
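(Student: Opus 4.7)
The plan is to establish the \emph{maximum} version; the \emph{minimum} case is obtained by the obvious dualisation. Arguing by contrapositive, I assume $\G$ admits no $>_\F$-maximum and construct a countable $\G_0\subset\G$ which is \emph{$>_\F$-cofinal} in the sense that for every $y\in\G$ there is $x\in\G_0$ with either $x=y$ or $x>_\F y$. Any such $\G_0$ automatically shares its $\ge$-upper bounds with $\G$, since $x>_\F y$ entails $x\ge y$ by (\ref{>F}).

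Suppose, for contradiction, that no countable subset of $\G$ is $>_\F$-cofinal in this sense. By transfinite recursion on $\alpha<\omega_1$ I construct a strictly $>_\F$-increasing family $\{g_\alpha:\alpha<\omega_1\}\subset\G$. At stage $\alpha$, the countable set $\{g_\beta:\beta<\alpha\}$ fails to be $>_\F$-cofinal, so there exists $y\in\G$ with $y\ne g_\beta$ and $g_\beta\not>_\F y$ for every $\beta<\alpha$. Since $\G$ is linearly $>_\F$-ordered and $>_\F$ is asymmetric, these exclusions leave $y>_\F g_\beta$ for every such $\beta$, and I set $g_\alpha=y$.

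Define $f_\alpha=g_{\alpha+1}\sim g_\alpha$ for $\alpha<\omega_1$. Each $f_\alpha$ lies in $\F$ by (\ref{>F}). For $\alpha<\beta$ one has $\alpha+1\le\beta$ and hence $g_{\alpha+1}\le g_\beta$ in the underlying Boolean order, so $f_\alpha\wedge f_\beta\le g_{\alpha+1}\cap g_\beta^c\le g_\beta\cap g_\beta^c=0$. As $0\notin\F$ by property (a), the uncountable family $\{f_\alpha:\alpha<\omega_1\}\subset\F$ satisfies $f_\alpha\wedge f_\beta\notin\F$ for all $\alpha\ne\beta$, contradicting the sparseness of $\F$ in itself.

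The main obstacle I anticipate is the recursive step: one has to convert the failure of countable cofinality at stage $\alpha$ into the existence of a single element of $\G$ strictly $>_\F$-dominating every $g_\beta$ with $\beta<\alpha$. This is precisely where the linearity of $>_\F$ on $\G$ combined with the asymmetry of $>_\F$ come into play; the remainder is a short disjointness computation followed by an appeal to sparseness, in the same spirit as Lemma \ref{lemma Boole1} but without the need for a Zorn argument on pairs.
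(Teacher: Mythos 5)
Your argument is correct, and it reaches the conclusion by a genuinely different route than the paper. The paper applies Zorn's lemma to the set of pairs $\Aa=\{(x,x')\in\G\times\G:x'>_\F x\}$ ordered by $(y,y')\succ(x,x')$ iff $y\ge x'$: a maximal $\succ$-chain $\Aa_0$ has pairwise disjoint differences $x'\sim x$, hence is countable by sparseness, and its set of coordinates is then shown to have the required upper-bound property through a final maximality argument. You instead run a transfinite recursion of length $\omega_1$: assuming no countable subset of $\G$ is $>_\F$-cofinal, linearity and asymmetry of $>_\F$ on $\G$ let you pick at each countable stage an element strictly $>_\F$-above all its predecessors, and the successive differences $g_{\alpha+1}\sim g_\alpha$ form an uncountable subfamily of $\F$ whose pairwise meets equal $0\notin\F$, contradicting sparseness. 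The key combinatorial fact --- consecutive differences along a $>_\F$-chain are pairwise disjoint, so sparseness bounds the length of the chain --- is identical in both proofs; what differs is the packaging. Your notion of $>_\F$-cofinality makes the ``same upper bounds'' conclusion immediate from \eqref{>F}, where the paper needs a separate closing step showing that an upper bound of $\G_0$ failing to bound $\G$ would extend the maximal chain; on the other hand you pay with an explicit well-ordered construction where the paper uses a maximality device. Two remarks if you polish this: the elements $g_{\alpha+1}\sim g_\alpha$ are automatically pairwise distinct (two equal ones would have their meet in $\F$), so the family handed to the sparseness hypothesis is genuinely uncountable; and your hypothesis that $\G$ has no $>_\F$-maximum is never actually invoked --- the contradiction argument yields a countable $>_\F$-cofinal subset unconditionally, the maximum case being simply the one where a singleton already suffices.
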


\begin{proof}
Put $\Aa=\{(x,x')\in\G\times\G:x'>_\F x\}$. For 
$(x,x'),(y,y')\in\Aa$ write $(y,y')\succ(x,x')$ when 
$y\ge x'$ and let $\Aa_0\subset\Aa$ be a maximal,
linearly $\succ$ ordered subset. If 
$(x,x'),(y,y')\in\Aa_0$ then
\begin{equation}
(x'\sim x)\cap(y'\sim y)
\le
x'\sim y
\le
y\sim y
\notin\F.
\end{equation}
Given that $\F$ is sparse, $\Aa_0$ must be countable
as well as $\G_0=\{x,x':(x,x')\in\Aa_0\}$. If $z_0\in\G$
is an upper bound for $\G_0$ but not for $\G$, then
there exists $z\in\G$ such that $z>_\F x$ for all
$x\in\G_0$. If $z$ is not an $>_\F$ maximum for 
$\G$, then there exists $z'\in\G$ such that
$z'>_\G z$ and therefore such that $(z,z')\in\Aa$
and $(z,z')\succ(x,x')$ for all $(x,x')\in\Aa_0$,
a contradiction.
\end{proof}

\section{Monotonic set functions}
\label{sec DO}

In this section we fix $\Bor\subset\A$ closed 
under $\cup$, $\cap$ and $\setminus$. If
$\psi:\Bor\to\R_+$ we define 
$\widebar\psi:
\FacSeq\Bor\to\R_+\cup\{+\infty\}$ 
as in \eqref{bar m}.

\begin{definition}
A function $\psi:\Bor\to\R_+$ possesses 
property \DO\ if any  collection 
$\{\sigma_i:i\in I\}
\subset
\Seq\Bor$
satisfying 
\begin{equation}
\label{DO}
\inf_{i\in I}\widebar\psi\big(\widebar\sigma_i\big)
	>
0
\qtext{and}
\widebar\psi\big(\widebar\sigma_i\cap\widebar\sigma_j\big)
	=
0
\qquad
i,j\in I,i\ne j,
\end{equation}
is at most countable. 
\end{definition}

\begin{theorem}
\label{th dense psi}
Let $\psi:\Bor\to\R_+$ be monotonic with 
$\psi(0)=0$. If $\psi$ satisfies 
property \DO\ then there exists 
$\sigma_0\in\SeqUp\Bor$ such that
\begin{equation}
\label{dense psi}
\lim_n\psi\big(\sigma(n)\setminus\sigma_0(n)\big)
	=
0
\qquad
\sigma\in\Seq\Bor.
\end{equation}
\end{theorem}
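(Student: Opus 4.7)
The plan is to apply Lemma \ref{lemma Boole1} inside the factor Boolean algebra $\FacSeq\Bor$, which by the remark in Section \ref{sec Boole} has the required property that every countable subset admits an upper bound. I would take
\[
\F = \G = \big\{\widebar\sigma\in\FacSeq\Bor:\widebar\psi(\widebar\sigma)>0\big\}
\]
and check the three hypotheses of that lemma in turn. Property \tiref{a} is immediate since $\widebar\psi(0)=\limsup_n\psi(0)=0$. Property \tiref{b} follows from monotonicity: if $\widebar\tau\ge\widebar\sigma$ then $\tau(n)\supseteq\sigma(n)$ for all but finitely many $n$, so $\limsup_n\psi(\tau(n))\ge\limsup_n\psi(\sigma(n))$.

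The one substantive verification is property \tiref{c}, i.e.\ that $\G$ is sparse in $\F$. Given a family $\{\widebar\sigma_\alpha:\alpha\in\Aa\}\subset\G$ with $\widebar\psi(\widebar\sigma_\alpha\cap\widebar\sigma_\beta)=0$ for $\alpha\ne\beta$, I would stratify by setting
\[
\Aa_k=\big\{\alpha\in\Aa:\widebar\psi(\widebar\sigma_\alpha)>1/k\big\},\qquad k\in\N.
\]
Each $\Aa_k$ is countable by direct application of property \DO, whence $\Aa=\bigcup_k\Aa_k$ is countable as well. This level-set bucketing is the only step that requires property \DO\ and is the key to reducing the theorem to Lemma \ref{lemma Boole1}.

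Lemma \ref{lemma Boole1} then furnishes $\widebar\sigma_0\in\F$ such that $\widebar\sigma\sim\widebar\sigma_0\notin\G$ for every $\widebar\sigma\in\G$, i.e.\ $\widebar\psi(\widebar\sigma\sim\widebar\sigma_0)=0$. For $\widebar\sigma\notin\G$ the conclusion is free from monotonicity since $\sigma(n)\setminus\sigma_0(n)\subseteq\sigma(n)$ gives $\widebar\psi(\widebar\sigma\sim\widebar\sigma_0)\le\widebar\psi(\widebar\sigma)=0$. Unwrapping via \eqref{bar m}, for every $\sigma\in\Seq\Bor$,
\[
\limsup_n\psi(\sigma(n)\setminus\sigma_0(n))=0,
\]
and as $\psi\ge 0$ the $\limsup$ is actually a limit, which is \eqref{dense psi} for the chosen representative $\sigma_0$ of $\widebar\sigma_0$.

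Finally, to secure $\sigma_0\in\SeqUp\Bor$, I would replace $\sigma_0$ by its running join $\sigma_0'(n)=\bigcup_{j\le n}\sigma_0(j)$, which lies in $\Bor$ since $\Bor$ is closed under finite unions, is increasing by construction, and satisfies $\sigma(n)\setminus\sigma_0'(n)\subseteq\sigma(n)\setminus\sigma_0(n)$, so monotonicity of $\psi$ preserves \eqref{dense psi}. The only genuine obstacle here was translating \DO\ into the sparsity condition of Lemma \ref{lemma Boole1}; everything else is formal manipulation of the factor algebra and of $\widebar\psi$.
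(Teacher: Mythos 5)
Your proposal is correct and takes essentially the same route as the paper's proof: both apply Lemma \ref{lemma Boole1} in $\FacSeq\Bor$ with $\F=\G=\{\widebar\psi>0\}$, derive sparseness from property \DO\ (your level-set stratification into the sets $\Aa_k$ is just the contrapositive of the paper's pigeonhole argument producing an uncountable $I_0$ with infimum above some $\eta>0$), and finish by passing to the running join to make $\sigma_0$ increasing. There is no gap.
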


\begin{proof}
Let 
$
\F
	=
\big\{\widebar\sigma\in\FacSeq\Bor:
\widebar\psi(\widebar\sigma)>0\big\}
$.
If a family $\{\widebar\sigma_i:i\in I\}\subset\F$ is such 
that $\widebar\sigma_i\wedge\widebar\sigma_j\notin\F$ 
and $\widebar\psi(\widebar\sigma_i)>0$ with
$I$ uncountable, there must then be 
$\eta>0$ and $I_0\subset I$ uncountable 
such that 
$\inf_{i\in I_0}\widebar\psi(\widebar\sigma_i)
	>
\eta$,
contradicting \eqref{DO}. Thus $\F$ is sparse in 
itself  and, in view of the preceding remark, the 
conditions of Lemma \ref{lemma Boole1} are 
satisfied with $\F=\G$. We deduce the existence 
of $\widebar\sigma_0\in\F$ such that
\begin{equation}
\widebar\psi(\widebar\sigma\sim\widebar\sigma_0)
	=	
0
\qquad
\widebar\sigma\in\F
\end{equation}
which holds trivially even when 
$\widebar\sigma\notin\F$. Of course, since $\psi$ 
is monotonic, the above conclusion still 
holds if we replace each set $\sigma_0(n)$ with 
$\bigcup_{j\le n}\sigma_0(j)\in\Bor$, 
so as to make the sequence increasing. 
\end{proof}

Loosely speaking, one may interpret Theorem 
\ref{th dense psi} as asserting that the sequence 
$\sigma_0$ summarizes most of the relevant 
information conveyed by $\psi$. 
Notice that if $\Bor$ is a $\sigma$ ring then
\eqref{dense psi} implies 
\begin{equation}
\label{sigma}
\lim_k
\psi\Big(\bigcup_{n>k}\sigma_0(n)\setminus\sigma_0(k)\Big)
	=
0.
\end{equation}

We provide examples in which condition 
\eqref{DO} may fail or take a rather
special form.

\begin{example}
Let the range of $\psi$ in Theorem 
\ref{th dense psi} be a finite set (e.g. when
$\psi$ is the supremum of a set of $0-1$ 
valued additive functions). Then for
each pair $i\ne j$,
$\widebar\psi(\widebar\sigma_i\wedge\widebar\sigma_j)
=
0$
if and only if
$\psi(\sigma_i(n)\cap\sigma_j(n))
=
0$
for $n$ sufficiently large. 
Fix $\eta>0$ and let $\{\widebar\sigma_i:i\in I\}$ 
be a maximal (with respect to inclusion) 
set in $\FacSeq\Bor$ satisfying 
\begin{align}
\label{diag}
\inf_{i\in I}\widebar\psi(\widebar\sigma_i)
	>
\eta
\qtext{and}
\widebar\psi(\widebar\sigma_i\cap\widebar\sigma_j)
	=
0
\qquad
i,j\in I,i\ne j.
\end{align}
Under the assumptions of Theorem 
\ref{th dense psi}, $I$ is at most countable. 
However, if $I$ is countably infinite, we may choose
iteratively $n_k$ such that 
\begin{equation}
n_k>n_{k-1}\vee k,\ 
\inf_{n>n_k}\psi(\sigma_k(n))
>
\eta
\qtext{and}
\sup_{n\ge n_k}\sup_{i<k}
\psi(\sigma_i(n)\cap\sigma_k(n))
=
0
\end{equation}
and define 
$\sigma_0(k)
	=
\sum_k\sigma_k(n)\sset{n_k\le n<n_{k+1}}$. 
Then \eqref{diag} extends to 
$\{\widebar\sigma_i:i\in I\}\cup\{\widebar\sigma_0\}$, 
contradicting the maximality of $I$. 
In other words, in the special case under 
consideration a collection $\{\widebar\sigma_i:i\in I\}$ 
as in Theorem \ref{th dense psi} is at most 
countable if and
only if it is finite.
\end{example}

The following example is related to weak 
compactness, as will be clear after Theorem
\ref{th D*}. Two sequences 
$\sigma,\tau\in\Seq\Bor$ 
are said to be quasi disjoint if 
$\sigma\wedge\tau\in\Seq[0]\Bor$,
i.e. if $\widebar\sigma\wedge\widebar\tau=0$.

\begin{example}
\label{ex N}
Let $\Bor=\PN$ in Theorem \ref{th dense psi}.
By a diagonal argument the maximal family
$\{B^i:i\in I\}$ of infinite subsets of $\N$ with 
finite pairwise intersection is uncountable. If
we write $\sigma_i(n)=B^i\cap\{n,n+1,\ldots\}$
and denote by $\sigma_i$ the corresponding 
sequence, we obtain an uncountable, pairwise 
quasi disjoint family
$\{\sigma_i:i\in I\}\subset\SeqDn\N$. 
By quasi disjointness, 
$\widebar\psi(\widebar\sigma_i\wedge\widebar\sigma_j)=0$
for all $i,j\in I$ with $i\ne j$. Thus in order
for $\psi$ to be of class \DO, we need to have
$\widebar\psi(\sigma_i)=0$ for all save countably
many $i\in I$. In fact this conclusion holds
under a weaker condition than property \DO\ 
that will be introduced in the next section
as property \D.
\end{example}

\section{Additive set functions.}
\label{sec D}

In this section we fix a given family $\Meas$
of measures on $\A$. Our purpose is to obtain a 
characterization of dominated sets of measures 
that may be given entirely in terms of the underlying 
algebra $\A$. The following property is the one 
considered in Example \ref{ex N}.

\begin{definition}
\label{def D}
$\mathscr M$ possesses property \D\ if every 
pairwise quasi disjoint collection 
$\{\sigma_\alpha:\alpha\in\Aa\}
	\subset
\SeqDn\A$ 
satisfying
\begin{equation}
\label{D}
\sup_{\mu\in\mathscr M}
\widebar\mu(\widebar\sigma_\alpha)
	>
0
\qquad
\alpha\in\Aa
\end{equation}
is at most countable. If the same conclusion
holds with \eqref{D} replaced by the weaker 
condition
\begin{equation}
\label{D*}
\lim_n\sup_{\mu\in\Meas}\mu(\sigma_\alpha(n))
	>
0
\qquad
\alpha\in\Aa
\end{equation}
then $\Meas$ is said to be of class \Du.
\end{definition}

In case the elements of $\mathscr M$ are 
countably additive and $\A$ a $\sigma$ 
algebra of subsets of some set $\Omega$, 
each sequence $\sigma_\alpha$ in Definition 
\ref{def D} may be replaced with the element 
$b_\alpha
	=
\bigcap_n\sigma_\alpha(n)$. 
Property \D~ takes then a somewhat easier 
form: each pairwise disjoint collection 
$\{b_\alpha:\alpha\in\Aa\}$ in $\A$ with 
$\sup_{\mu\in\mathscr M}\mu(b_\alpha)
	>
0$
is at most countable. This weaker version of 
property \D\  was introduced  long ago in the 
literature under the name of ``countable chain'' 
\CC\ condition by Maharam \cite[p. 160]{maharam} 
in her study of measure algebras and plays an 
important role in the papers by Musia\l\ 
\cite{musial} and Drewnowski \cite{drewnowski_74} 
(who credits Dubrovski\u{i} \cite{dubrovskii} 
for its first formulation)%
\footnote{
Maharam, differing from the other authors cited,
 considers this condition in the case in which 
 $\Meas$ is the set of {\it all} measures on $\A$.
Drewnowski, \cite[Theorem 2.3]{drewnowski_74}
and Musia\l\ \cite[Theorem 2]{musial}, prove that 
\CC\ is necessary and sufficient for a countably 
additive measure with values in a locally convex 
vector space to admit a control measure. Their 
claim may be easily adapted to show that such 
condition is necessary and sufficient for a dominated 
set of countably additive set functions on a 
$\sigma$ algebra to be dominated, a result 
rediscovered in \cite[Theorem 3]{JMAA_2016}
and whose proof is an immediate corollary of 
the following Theorem \ref{th D} of the present 
paper. I am grateful to professor Lipecki who, 
in a private communication, called my attention 
on these references giving me the opportunity to 
acknowledge the results obtained by a group of 
outstanding mathematicians whose work is 
perhaps too little known.
}.

It should be mentioned that the need for an
extension from {\it sets} to {\it families of sets}, 
exemplified in the shift from property \CC\ to 
property \D, was already clear to Maharam 
who formulated ``postulate II'' (p. 159) as a 
reinforcement of property \CC. In another 
paper on measure algebras, Kelley \cite{kelley} 
considered families of sets with positive 
intersection number. More comments on 
the relationship with the measure algebra 
literature will appear in the closing section 
of the paper.

Before moving to the general implications of
these definitions, three elementary facts may 
be easily established. 

(1). 
A set $\Meas$ consisting of a single element 
$\nu$ possesses property \D. In fact for each 
sequence $\sigma_\alpha$ as in Definition 
\ref{def D} one may let
\begin{equation}
\nu_\alpha(b)
	=
\lim_n\nu(a\cap\sigma_\alpha(n))
\qquad
b\in\A
\end{equation}
obtaining a family $\{\nu_\alpha:\alpha\in\Aa\}$ 
of pairwise orthogonal, non null elements 
contained in the ideal generated by $\nu$, 
so that $\Aa$ must be countable, by Lemma 
\ref{lemma AL}.  

(2).
Thus every dominated set possesses property \D. 
This  same conclusion is no longer valid if $\nu$
dominates $\Meas$ weakly (i.e. $\nu(a)=0$
implies $\mu(a)=0$ for all $\mu\in\Meas$) 
as this  latter condition is not sufficient to infer 
from \eqref{D} that 
$\lim_n\nu(\sigma_\alpha(n))>0$.

In this section we have a special interest for 
those subfamilies of $\FacSeq\A$ in restriction 
to which $\widebar\mu$ is additive. An important 
such class is 
\begin{equation}
\label{S0(A)}
\Sigma^{\downarrow}(\A)
=
\SeqDn\A/\Seq[0]\A.
\end{equation}
Another one, given a measure $\nu$ on $\A$, is the 
subclass 
$\Sigma^{\downarrow}(\A,\nu)
	\subset
\Sigma^{\downarrow}(\A)$ 
obtained upon replacing $\SeqDn\A$ in \eqref{S0(A)} 
with 
$\SeqDn{\A,\nu}
=
\SeqDn{\A}\cap\Seq{\A,\nu}$
where
\begin{equation}
\Seq{\A,\nu}
	=
\Big\{\sigma\in\Seq\A:\lim_n2^n\sup_{k>n}
\nu\big(\sigma(n)\bigtriangleup\sigma(k)\big)
	=
0
\Big\}
\end{equation}
is the Boolean algebra of sequences with exponential 
rate of $\nu$-convergence.

Both $\Sigma^{\downarrow}(\A)$ and 
$\Sigma^{\downarrow}(\A,\nu)$ contain 
the zero and the unit of $\FacSeq\A$ and 
are closed with respect to join and meet. 
Since $\widebar\mu$ is additive on 
$\Sigma^{\downarrow}(\A)$ it is then so 
also on the algebra $\Sigma(\A)$ generated 
by $\Sigma^{\downarrow}(\A)$ 
\cite[p. 478]{horn_tarski} and, {\it a fortiori}, 
on $\Sigma(\A,\nu)$, the algebra generated 
by $\Sigma^{\downarrow}(\A,\nu)$. Moreover, 
since $\Seq{\A,\nu}$ is a Boolean algebra, then 
$\Sigma(\A,\nu)
	\subset
\Seq{\A,\nu}/\Seq[0]\A
$.

(3).
A final simple conclusion is obtained in the following:

\begin{lemma}
\label{lemma Sn}
Let $\nu$ be a measure on $\A$, $\varepsilon>0$ and 
$\widebar\sigma\in\Sigma(\A,\nu)$. There are 
$\widebar\tau,\widebar\upsilon^c
\in
\Sigma^{\downarrow}(\A)$ 
such that
\begin{equation}
\widebar\tau\le\widebar\sigma\le\widebar\upsilon
\qtext{and}
\widebar\nu(\widebar\tau)+\varepsilon
	\ge
\widebar\nu(\widebar\sigma)
	\ge
\widebar\nu(\widebar\upsilon)-\varepsilon.
\end{equation}
\end{lemma}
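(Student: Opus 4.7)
The plan is to reduce the statement to the case where $\widebar\sigma = \widebar\alpha \sim \widebar\beta$ for $\widebar\alpha, \widebar\beta \in \Sigma^{\downarrow}(\A,\nu)$ with $\widebar\beta \le \widebar\alpha$, and then to handle this single-difference case by freezing one coordinate of each approximant at a suitably large index.

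For the reduction I would rely on the same lattice-to-algebra fact from \cite[p.~478]{horn_tarski} that the paper has already cited to deduce additivity of $\widebar\nu$ on $\Sigma(\A,\nu)$. Note that $\Sigma^{\downarrow}(\A,\nu)$ is a bounded sublattice of $\FacSeq\A$: closure under coordinatewise $\vee$ and $\wedge$ preserves both decreasingness and the exponential $\nu$-convergence rate, the latter because $(\sigma\vee\tau)(n)\bigtriangleup(\sigma\vee\tau)(k)\subseteq(\sigma(n)\bigtriangleup\sigma(k))\cup(\tau(n)\bigtriangleup\tau(k))$, and similarly for $\wedge$. The standard structural description of the Boolean algebra generated by such a sublattice then yields
\[
\widebar\sigma \;=\; \bigvee_{i=1}^{k}\bigl(\widebar\alpha_i \sim \widebar\beta_i\bigr),
\qquad
\widebar\alpha_i,\widebar\beta_i \in \Sigma^{\downarrow}(\A,\nu),
\quad
\widebar\beta_i \le \widebar\alpha_i,
\]
with the differences pairwise disjoint. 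If the single-difference case yields sandwiches $(\widebar\tau_i,\widebar\upsilon_i)$ with errors at most $\varepsilon/k$, then $\widebar\tau = \bigvee_i \widebar\tau_i$ and $\widebar\upsilon = \bigvee_i \widebar\upsilon_i$ do the job: $\Sigma^{\downarrow}(\A)$ is closed under finite $\vee$ and $\wedge$, additivity of $\widebar\nu$ on the disjoint $\widebar\tau_i$ (disjointness is inherited from the $\widebar\alpha_i\sim\widebar\beta_i$) gives the lower bound and subadditivity on the $\widebar\upsilon_i$ gives the upper.

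For the single-difference case I would fix decreasing representatives $\alpha,\beta\in\SeqDn{\A,\nu}$ and, by replacing $\beta$ with $\alpha\wedge\beta$ (which does not alter its class since $\widebar\beta \le \widebar\alpha$), arrange $\beta(n)\subseteq\alpha(n)$ for all $n$. Since the sequences are decreasing, $\widebar\nu(\widebar\alpha)=\lim_n\nu(\alpha(n))$ and similarly for $\beta$, so some $n_0$ makes both $\nu(\alpha(n_0))-\widebar\nu(\widebar\alpha)$ and $\nu(\beta(n_0))-\widebar\nu(\widebar\beta)$ smaller than $\varepsilon$. I then take $\tau(n)=\alpha(n)\setminus\beta(n_0)$, which is manifestly decreasing and, for $n\ge n_0$, satisfies $\tau(n)\subseteq\alpha(n)\setminus\beta(n)$; and $\upsilon(n)=\alpha(n_0)\setminus\beta(n)$, which is manifestly increasing (so $\widebar\upsilon^{c}\in\Sigma^{\downarrow}(\A)$) and contains $\alpha(n)\setminus\beta(n)$ for $n\ge n_0$. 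The $\widebar\nu$-errors collapse, by additivity and monotonicity of the representatives, to $\nu(\beta(n_0))-\nu(\beta(n))$ and $\nu(\alpha(n_0))-\nu(\alpha(n))$ respectively, both eventually below $\varepsilon$.

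The genuinely non-routine point is the structural reduction: one needs the representation of $\Sigma(\A,\nu)$-elements as finite disjoint joins of differences of elements of $\Sigma^{\downarrow}(\A,\nu)$, together with the check that $\Sigma^{\downarrow}(\A,\nu)$ is closed under the lattice operations (mildly delicate because of the exponential convergence condition). Once these are in hand, the single-difference case is a one-coordinate freezing trick that uses only the $\limsup$ definition of $\widebar\nu$ and the monotonicity of the chosen representatives.
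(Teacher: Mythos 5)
Your proof is correct, but it takes a genuinely different route from the paper's. You never actually use the defining exponential rate condition of $\Seq{\A,\nu}$ except to check lattice closure; instead you lean on the structural fact that the Boolean algebra generated by a bounded sublattice consists of finite disjoint joins of proper differences, reduce to a single difference $\widebar\alpha\sim\widebar\beta$ of decreasing representatives, and freeze one coordinate at a large index. The paper avoids the decomposition entirely: it picks a representative $\sigma\in\Seq{\A,\nu}$ of $\widebar\sigma$ (possible because $\Sigma(\A,\nu)\subset\Seq{\A,\nu}/\Seq[0]\A$), chooses $N$ with $2^{-N}<\varepsilon/2$ and $\sup_{k\ge n\ge N}\nu\big(\sigma(n)\bigtriangleup\sigma(k)\big)<2^{-n}$, and sets $\tau(n)=\bigcap_{N\le j\le n}\sigma(j)$ and $\upsilon(n)=\bigcup_{N\le j\le n}\sigma(j)$; the error $\nu\big(\tau(k)\bigtriangleup\sigma(k)\big)$ is then dominated by the geometric tail $\sum_{j\ge N}2^{-j}=2^{-(N-1)}<\varepsilon$, and likewise for $\upsilon$. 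The paper's argument is shorter and makes clear why $\Sigma(\A,\nu)$ is built from an exponential (i.e.\ summable) Cauchy rate: that summability is exactly what keeps the running intersections and unions $\varepsilon$-close to $\sigma$. Your argument buys a different kind of insight: the quantitative step uses only monotonicity of the representatives and nothing about the rate, at the price of invoking the (standard but not free) semiring/disjoint-difference representation of the generated algebra and the closure of $\Sigma^{\downarrow}(\A,\nu)$ under $\vee$ and $\wedge$, both of which you would need to write out in full for a complete proof.
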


\begin{proof}
Pick $\sigma\in\widebar\sigma$ and fix $N$ large enough so that 
\begin{equation}
2^{-N}<\varepsilon/2
\qtext{and}
\sup_{k\ge n\ge N}
\nu\big(\sigma(n)\bigtriangleup\sigma(k)\big)
	<
2^{-n}.
\end{equation}
Define $\tau(n)=1$ and
$\upsilon(n)=0$ if $n<N$ or else 
$\tau(n)=\bigcap_{N\le j\le n}\sigma(j)$
and
$\upsilon(n)=\bigcup_{N\le j\le n}\sigma(j)$.
Clearly, $\widebar\tau\le\widebar\sigma\le\widebar\upsilon$
and $\tau,\upsilon^c\in\SeqDn\A$. If $k\ge N$
then
\begin{align*}
\tau(k)\bigtriangleup\sigma(k)
	\subset
\bigcup_{j=N}^{k-1}\sigma(j)\bigtriangleup\sigma(k),
\quad 
\upsilon(k)\bigtriangleup\sigma(k)
	\le
\bigcup_{j=N}^{k-1}\sigma(j)\bigtriangleup\sigma(k)
\end{align*}
and
$
\nu\big(\bigcup_{j=N}^{k-1}\sigma(j)\bigtriangleup\sigma(k)\big)
\le
2^{-(N-1)}
<
\varepsilon
$.
\end{proof}

We shall use the notation $\nu\perp\mu$ and $\nu\ll\mu$
in exactly the same sense as for set functions.

\begin{proposition}
\label{propo D}
Let $\Meas$ possess property \D. Choose a measure
$\nu$ on $\A$ such that $\nu\perp\mu$ for every 
$\mu\in\Meas$ and fix $0<t<1$. Then there exists 
$\tau_*\in\SeqDn\A$ such that
\begin{equation}
\lim_n\nu(\tau_*(n))
	\ge 
(1-t)\norm\nu
\qtext{while}
\sup_{\mu\in\Meas}\lim_n\mu(\tau_*(n))
	=
0.
\end{equation} 
\end{proposition}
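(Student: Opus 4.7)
The plan is to produce $\widebar\tau_\ast\in\Sigma^\downarrow(\A)$ satisfying $\widebar\nu(\widebar\tau_\ast)\ge(1-t)\norm\nu$ and $\widebar\mu(\widebar\tau_\ast)=0$ for every $\mu\in\Meas$; any decreasing representative $\tau_\ast$ will then verify the two displayed conclusions of the proposition. The starting input is pointwise singularity: for each $\mu$ and each $\varepsilon>0$, iterating $\mu\perp\nu$ along a geometric budget yields $\sigma^{\mu,\varepsilon}\in\SeqDn\A$ with $\lim_n\mu(\sigma^{\mu,\varepsilon}(n))=0$ and $\lim_n\nu(\sigma^{\mu,\varepsilon}(n))\ge(1-\varepsilon)\norm\nu$.

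I would fix a sequence $\varepsilon_\xi>0$ with $\sum\varepsilon_\xi<t/2$ and build $\widebar\tau_\xi\in\Sigma^\downarrow(\A)$ by transfinite recursion, starting from $\widebar\tau_0=1$. At a successor stage $\xi+1$, the recursion stops if $\widebar\mu(\widebar\tau_\xi)=0$ for every $\mu\in\Meas$; otherwise I pick $\mu_\xi\in\Meas$ with $\widebar\mu_\xi(\widebar\tau_\xi)>0$ and set $\widebar\tau_{\xi+1}=\widebar\tau_\xi\wedge\widebar\sigma^{\mu_\xi,\varepsilon_\xi}$, which forces $\widebar\mu_\xi(\widebar\tau_{\xi+1})=0$ and $\widebar\nu(\widebar\tau_{\xi+1})\ge\widebar\nu(\widebar\tau_\xi)-\varepsilon_\xi\norm\nu$. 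At a countable limit $\lambda$ I take $\widebar\tau_\lambda$ to be the meet of $\{\widebar\tau_\xi:\xi<\lambda\}$ inside $\Sigma^\downarrow(\A)$, formed via the componentwise diagonal construction of the remark after Lemma \ref{lemma Boole1}, so that $\widebar\nu(\widebar\tau_\lambda)=\inf_{\xi<\lambda}\widebar\nu(\widebar\tau_\xi)$ still remains above the accumulated threshold.

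Property \D\ forces the recursion to terminate at a countable stage. Indeed, if it ran on uncountably many $\xi$, pick for each $\xi$ an index $n_\xi$ with $\mu_\xi(\sigma^{\mu_\xi,\varepsilon_\xi}(n_\xi))<\widebar\mu_\xi(\widebar\tau_\xi)/2$ and define
\[
\rho_\xi(n)=\tau_\xi(n)\setminus\sigma^{\mu_\xi,\varepsilon_\xi}(n_\xi),\qquad n\in\N;
\]
then $\rho_\xi\in\SeqDn\A$ (decreasing, since $\tau_\xi$ is and the subtracted set is fixed) and $\widebar\mu_\xi(\widebar\rho_\xi)\ge\widebar\mu_\xi(\widebar\tau_\xi)/2>0$. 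For $\alpha<\beta$, $\widebar\tau_\beta\le\widebar\tau_{\alpha+1}\le\widebar\sigma^{\mu_\alpha,\varepsilon_\alpha}$, so eventually $\rho_\beta(n)\subseteq\tau_\beta(n)\subseteq\sigma^{\mu_\alpha,\varepsilon_\alpha}(n)\subseteq\sigma^{\mu_\alpha,\varepsilon_\alpha}(n_\alpha)$, while $\rho_\alpha(n)$ is disjoint from $\sigma^{\mu_\alpha,\varepsilon_\alpha}(n_\alpha)$ by construction; hence $\rho_\alpha(n)\cap\rho_\beta(n)=\emptyset$ eventually. This exhibits an uncountable pairwise quasi disjoint family in $\SeqDn\A$ with $\sup_{\mu\in\Meas}\widebar\mu(\widebar\rho_\xi)>0$, contradicting \D. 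Upon termination at a countable $\xi_\ast$, the budget delivers $\widebar\nu(\widebar\tau_{\xi_\ast})\ge(1-t)\norm\nu$ while $\widebar\mu(\widebar\tau_{\xi_\ast})=0$ for every $\mu\in\Meas$, and $\widebar\tau_\ast=\widebar\tau_{\xi_\ast}$ concludes.

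The main obstacle I expect is the rigorous handling of the limit stages: one must verify that the diagonal meet actually lies in $\Sigma^\downarrow(\A)$ and that the $\widebar\nu$-threshold descends correctly along the countable decreasing family, despite the fact that $\widebar\nu$ is only additive and not $\sigma$-additive on $\Sigma^\downarrow(\A)$. The key new ingredient, distinguishing this argument from a classical Lebesgue-decomposition recursion, is the use of property \D\ to cap the transfinite length at a countable ordinal.
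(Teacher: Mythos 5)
There is a genuine gap, and in fact two. First, the budget. You fix $\varepsilon_\xi>0$ with $\sum_\xi\varepsilon_\xi<t/2$, but for the property \D\ argument to have any force the recursion must be capable of running through uncountably many stages, and uncountably many strictly positive numbers cannot have a finite sum. So either the $\varepsilon_\xi$ are indexed by a countable ordinal, in which case the recursion halts when the index set is exhausted --- possibly with some $\mu$ still charging $\widebar\tau_\xi$, so that ``termination at a countable stage'' is not the good kind of termination --- or the recursion is not well defined. As written, property \D\ is never actually engaged. The repair is to drop the budget altogether: note that the successive differences $\widebar\tau_\xi\sim\widebar\tau_{\xi+1}$ are pairwise disjoint and each is charged by $\mu_\xi$ (since $\widebar\mu_\xi(\widebar\tau_\xi)>0$ while $\widebar\mu_\xi(\widebar\tau_{\xi+1})=0$), so sparseness of $\F=\{\widebar\sigma:\sup_{\mu\in\Meas}\widebar\mu(\widebar\sigma)>0\}$ --- which is what \D\ gives --- already caps the number of successor steps at countably many; this is essentially how the paper uses Lemma \ref{lemma Boole2}, keeping every element of the chain above a \emph{fixed} threshold $(1-t/2)\norm\nu$ instead of spending mass at each step.

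Second, and more seriously, the limit stages. For the componentwise diagonal meet $\widebar\tau_\lambda$ of a countable decreasing family one only gets $\widebar\nu(\widebar\tau_\lambda)\le\inf_{\xi<\lambda}\widebar\nu(\widebar\tau_\xi)$; the reverse inequality is false in general for a finitely additive $\nu$ and arbitrary representatives, because $\nu\big(\bigcap_{j\le n}\tau_{\xi_j}(n)\big)$ can be driven far below $\nu(\tau_{\xi_n}(n))$ when the indices at which the inclusions $\tau_{\xi_{j+1}}(n)\subset\tau_{\xi_j}(n)$ kick in grow faster than $n$. You flag this as ``the main obstacle'' but offer no resolution, and without it the invariant $\widebar\nu(\widebar\tau_\xi)\ge(1-t)\norm\nu$ can be destroyed at the very first limit ordinal. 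This is precisely the point where the paper's machinery lives: it works inside $\Sigma(\A,\nu)$, the sequences with exponential rate of $\nu$-convergence, passes to a subsequence with $\sup_{k>n}\widebar\nu(\widebar\sigma_n\sim\widebar\sigma_k)<2^{-2n}$, and performs a second diagonal extraction of inner indices (the $i_k$ of \eqref{unif cauchy}) before forming $\tau(n)=\sigma_n'(n)$, so that the diagonal element provably loses at most $O(2^{-2n})$ of $\nu$-mass; Lemma \ref{lemma Sn} then converts the result back into $\SeqDn\A$ at the cost of the remaining $t/2$. Your overall architecture (kill one $\mu$ at a time, use \D\ to bound the length of the descent) is a legitimate alternative to the paper's maximal-chain argument and would even avoid the Orlicz step at the end, but both gaps must be filled, and the second one requires importing essentially all of the paper's technical apparatus.
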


\begin{proof}
If $\nu=0$ choose $\tau_*=0$. If $\norm\nu>0$ 
consider the sets
\begin{equation}
\Delta
=
\{\widebar\sigma\in\Sigma(\A,\nu):\widebar\nu(\widebar\sigma)
	\ge
(1-t/2)\norm\nu\}
\qqtext{and}
\F
	=
\Big\{\widebar\sigma\in\Sigma(\A,\nu):
\sup_{\mu\in\Meas}\widebar\mu(\widebar\sigma)
	>
0\Big\}.
\end{equation}
Since $\F$ satisfies properties \tiref a and \tiref b
of Lemma \ref{lemma Boole1}, the order $>_\F$
may be defined. We claim that $\Delta$ admits 
$\widebar\tau$ such that 
$\widebar\tau\not
	>_\F
\widebar\tau'$ 
for any $\widebar\tau'\in\Delta$. Let to this 
end $\Delta_0$ be a maximal, linearly $>_\F$ 
ordered subset of $\Delta$. If $\Delta_0$ 
admits a $>_\F$ minimum, the claim is proved. 
If not, then by Lemma \ref{lemma Boole2} we 
may assume that $\Delta_0$ admits a 
countable subset having the same bounds 
as $\Delta_0$. Given that $\Delta_0$ is 
linearly $>_\F$ ordered we can extract an 
$>_\F$ decreasing sequence $\seqn{\widebar\sigma}$ 
from $\Delta_0$ such that $\Delta_0$ has the 
same lower bounds as 
$\{\widebar\sigma_n:n\in\N\}$. 
Upon passing to a subsequence, if necessary, 
we can further assume 
\begin{equation}
\label{cauchy}
\sup_{k>n}\widebar\nu(\widebar\sigma_n\sim\widebar\sigma_k)
<
2^{-2n}.
\end{equation}
For each $n\in\N$, choose $\sigma_n\in\widebar\sigma_n$ 
so that $\sigma_n\ge\sigma_{n+1}$. Given that
$\sigma_n\in\Sigma(\A,\nu)$ the quantity $\lim_k\nu(\sigma_n(k))$
exists for each $n\in\N$. But then, exploiting a 
diagonal argument, we can construct a sequence
$\seq ik$ of integers such that $i_k>i_{k-1}\vee k$
and that
\begin{equation}
\label{unif cauchy}
\sup_{\{n,p,q:\ n\le k\le p\wedge q\}}
\nu(\sigma_n(i_p)\bigtriangleup\sigma_n(i_q))
	\le
2^{-2k}
\qquad
k\in\N.
\end{equation}
Letting $\sigma'_n(k)=\sigma_n(i_k)$ we conclude
that $\sigma'_n\ge\sigma'_{n+1}$, 
$\sigma'_n\in\Sigma(\A,\nu)$ and that
$\widebar m(\widebar\sigma'_n)=\widebar m(\widebar\sigma_n)$
for all $n\in\N$ and all additive set function $m$ on $\A$.

Define now $\tau\in\Seq\A$ by letting
$
\tau(n)
	=
\sigma'_n(n)
$ 
for all $n\in\N$. Then $\tau(k)\subset\sigma'_n(k)$ for 
each $k\ge n$ so that 
$\widebar\tau
\le
\widebar\sigma'_n$.
To show that $\tau$ is the desired lower bound we need
to show that $\tau\in\Delta$. If $k>n$
\begin{align*}
\nu\big(\tau(n)\bigtriangleup \tau(k)\big)
	&=
\nu\big(\sigma'_n(n)\bigtriangleup \sigma'_k(k)\big)
\\
	&\le
\sup_{j>k}\nu\big(\sigma'_n(n)\bigtriangleup\sigma'_n(j)\big)
+
\sup_{j>k}\nu\big(\sigma'_k(k)\bigtriangleup\sigma_k(j)\big)
+
\lim_j\nu\big(\sigma'_n(j)\setminus\sigma'_k(j)\big)
\\
	&\le
2^{-2(n-1)}
\end{align*}
by \eqref{cauchy} and \eqref{unif cauchy} so that
$\tau\in\Seq{\A,\nu}$.
In addition, the inequality
\begin{align*}
\widebar\nu(\widebar\tau)
	\ge
\lim_n\lim_j\nu(\sigma'_n(j))
-
\lim_n\sup_{j>n}\nu(\sigma'_n(n)\bigtriangleup \sigma'_n(j))
	=
\lim_n\widebar\nu(\widebar\sigma'_n)
	\ge
(1-t/2)\norm\nu
\end{align*}
which follows from \eqref{unif cauchy} implies that 
$\widebar\tau\in\Delta$. Thus $\tau$ is a lower 
bound for $\Delta_0$ and, since $\Delta_0$ is 
maximal, it admits no $\widebar\upsilon\in\Delta$ 
with $\widebar\tau>_\F\widebar\upsilon$. This
conclusion translates into the statement
\begin{equation}
\label{claim}
\sup_{\mu\in\Meas}\widebar\mu(\widebar\tau\sim\widebar\upsilon)
	=
0
\qquad
\widebar\upsilon\in\Delta,\ 
\widebar\upsilon\le\widebar\tau.
\end{equation}
Choose, e.g., $\tau_0\in\SeqDn\A$ such that 
$\nu(\tau_0(n))<2^{-2n}$ and let
$\upsilon=\tau\sim\tau_0$.
Then $\upsilon\in\Seq{\A,\nu}$, $\upsilon\le\tau$ 
and
$
\widebar\nu(\upsilon)
	=
\widebar\nu(\tau\sim\tau_0)
	=
\widebar\nu(\tau)
	\ge 
(1-t/2)\norm\nu
$
i.e. $\widebar\upsilon\in\Delta$. But then by \eqref{claim}
\begin{align*}
0
	=
\sup_{\mu\in\Meas}\widebar\mu(\widebar\tau\sim\widebar\upsilon)
	=
\sup_{\mu\in\Meas}
\widebar\mu(\widebar\tau\wedge\widebar\tau_0)
	=
\sup_{\mu\in\Meas}\lim_j\lim_k
\mu(\tau(k)\cap\tau_0(j)).
\end{align*}
The same conclusion holds {\it a fortiori} if 
we replace $\widebar\tau$ with 
$\widebar\tau_*\in\Sigma^{\downarrow}(\A)$ 
chosen, in accordance with Lemma \ref{lemma Sn}, 
such that $\widebar\tau_*\le\widebar\tau$
and
$\widebar\nu(\widebar\tau_*)
	\ge
(1-t)\norm\nu$.
This leads to 
\begin{align*}
0
	=
\sup_{\mu\in\Meas}\lim_j\lim_k
\mu(\tau_*(k)\cap \tau_0(j))
	=
\sup_{\mu\in\Meas}\lim_j\lambda_\mu(\tau_0(j))
\end{align*}
where we have implicitly defined $\lambda_\mu\in ba(\A)$
via
\begin{equation}
\lambda_\mu(H)
	=
\lim_k\mu(\tau_*(k)\cap H)
\qquad
H\in\A.
\end{equation} 
According to Orlicz \cite[Theorem 3, p. 124]{orlicz} 
this is enough to conclude that $\lambda_\mu\ll\nu$.
However, by construction, $\lambda_\mu\ll\mu$. 
Then necessarily,
$
0
	=
\sup_{\mu\in\Meas}\lambda_\mu(1)
	=
\sup_{\mu\in\Meas}\widebar\mu(\widebar\tau_*)
$
for all $\mu\in\Meas$.
\end{proof}

It will be clear after the next result that the condition 
stated in Proposition \ref{propo D} is not only necessary 
for property \D\ but sufficient as well.

\begin{theorem}
\label{th D}
$\mathscr M$ possesses property \D\ 
if and only if it is dominated. 
\end{theorem}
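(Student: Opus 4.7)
The direction ``dominated $\Rightarrow$ \D'' is fact (2) in the discussion preceding Proposition \ref{propo D}, so I focus on the converse.

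Assume $\Meas$ has property \D. I apply Lemma \ref{lemma AL} to the AL-space $X=ba(\A)$ and the ideal $I$ generated by $\Meas$: once $I\subset B_\nu$ for some $\nu$ is established, the construction in the proof of Lemma \ref{lemma AL} yields such a $\nu$ of the form $\sum_n c_n\mu_n$ with $\mu_n\in\Meas$, and this $\nu$ is a dominating measure for $\Meas$ in the required sense. Thus the theorem reduces to ruling out, under property \D, an uncountable family of pairwise orthogonal nonzero positive elements in $I$. For contradiction, suppose $\{\lambda_\xi\}_{\xi<\omega_1}\subset I_+\setminus\{0\}$ is pairwise orthogonal. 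Since $I$ is generated by $\Meas$, after rescaling and a pigeonhole argument I may assume $\lambda_\xi\le\mu_\xi$ for some $\mu_\xi\in\Meas$ and $\norm{\lambda_\xi}\ge\eta>0$ uniformly in $\xi$. The auxiliary family $\{\lambda_\xi\}$ inherits property \D\ from $\Meas$ through the inequality $\widebar\lambda_\xi(\widebar\sigma)\le\widebar\mu_\xi(\widebar\sigma)$.

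For each $\xi$, applying Proposition \ref{propo D} with $\nu:=\lambda_\xi$ and the family $\{\lambda_\eta : \eta\ne\xi\}$ in place of $\Meas$---admissible because $\lambda_\xi\perp\lambda_\eta$ for $\eta\ne\xi$---produces $\tau_\xi\in\SeqDn\A$ satisfying $\lim_n\lambda_\xi(\tau_\xi(n))\ge(1-t)\eta$ and $\lim_n\lambda_\eta(\tau_\xi(n))=0$ for every $\eta\ne\xi$. In particular $\sup_{\mu\in\Meas}\widebar\mu(\widebar\tau_\xi)\ge\widebar\mu_\xi(\widebar\tau_\xi)>0$.

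The main obstacle is to upgrade $\{\tau_\xi\}_{\xi<\omega_1}$ to a pairwise quasi-disjoint family $\{\sigma_\xi\}\subset\SeqDn\A$ retaining $\sup_{\mu\in\Meas}\widebar\mu(\widebar\sigma_\xi)>0$, for such a family will directly contradict property \D. I plan to carry out the upgrade by transfinite recursion on $\xi<\omega_1$: at stage $\alpha$, enumerate the countably many predecessors as $\{\xi_k\}_{k\in\N}$ and pass to a subsequence of $\tau_\alpha$ along which $\lambda_\alpha(\sigma_{\xi_k}(n))\to 0$ rapidly enough in $n$, then set $\sigma_\alpha(n):=\tau_\alpha(n)\setminus\bigcup_{k\le n}\sigma_{\xi_k}(n)$. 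Then $\sigma_\alpha(n)\cap\sigma_{\xi_k}(n)=\emp$ for every $n\ge k$, so $\widebar\sigma_\alpha\wedge\widebar\sigma_{\xi_k}=0$, while the rapid decay preserves $\lim_n\lambda_\alpha(\sigma_\alpha(n))>0$. Coordinating these decay rates across the $\omega_1$ stages so that everything works simultaneously is the delicate bookkeeping step.
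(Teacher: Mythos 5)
Your overall strategy is essentially the paper's: both proofs reduce the sufficiency direction to showing that property \D\ forbids an uncountable pairwise orthogonal family of nonzero components $\lambda_\xi\le\mu_\xi\in\Meas$, both apply Proposition \ref{propo D} to each member of such a family against the others, both then disjointify the resulting sequences $\tau_\xi$ to contradict \D, and both finish by dominating $\Meas$ with a $\sigma$-convex combination drawn from a countable maximal orthogonal family. Your detour through Lemma \ref{lemma AL} versus the paper's chain of pairs $(M_\alpha,M'_\alpha)$ is only a difference in how the Zorn bookkeeping is organized, not a different route.

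The one step you leave open is, however, the one you describe incorrectly, so it needs to be fixed rather than merely flagged. First, no coordination ``across the $\omega_1$ stages'' is required: stage $\alpha$ only has to secure $\lim_n\lambda_\alpha(\sigma_\alpha(n))>0$ and quasi-disjointness from its predecessors, and nothing chosen at stage $\alpha$ is constrained by later stages. Second, the mechanism you propose --- passing to a subsequence of $\tau_\alpha$ and then setting $\sigma_\alpha(n)=\tau_\alpha(n)\setminus\bigcup_{k\le n}\sigma_{\xi_k}(n)$ --- does not achieve the stated goal: reindexing $\tau_\alpha$ leaves the quantities $\lambda_\alpha(\sigma_{\xi_k}(n))$ untouched, and in the formula as written the sum $\sum_{k\le n}\lambda_\alpha(\sigma_{\xi_k}(n))$ has a growing number of terms and need not stay small even though each term tends to $0$. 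The shift must be applied to the \emph{subtracted} sequences: since $\lambda_\alpha\perp\lambda_{\xi_j}$ gives $\lim_m\lambda_\alpha(\tau_{\xi_j}(m))=0$, choose $k^\alpha_j$ with $\sup_{m\ge k^\alpha_j}\lambda_\alpha\big(\tau_{\xi_j}(m)\big)<2^{-j}\delta$ and set $\sigma_\alpha(n)=\tau_\alpha(n)\setminus\bigcup_{j<n}\tau_{\xi_j}(k^\alpha_j\vee n)$. For $n\ge k^\alpha_j$ the subtracted set coincides with $\tau_{\xi_j}(n)\supset\sigma_{\xi_j}(n)$, which yields quasi-disjointness, while the total mass removed is at most $\delta$, preserving $\lim_n\lambda_\alpha(\sigma_\alpha(n))\ge(1-t)\eta-\delta>0$; this is exactly the device in the paper's proof. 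Two smaller points should also be made explicit: reducing $\lambda_\xi\le\sum_ic_i\mu_i$ to $\lambda_\xi\le c\,\mu_\xi$ for a single $\mu_\xi\in\Meas$ uses the Riesz decomposition property; and the dominating element produced by Lemma \ref{lemma AL} lies in $\widebar{I}$ and is built from elements of $I$ rather than of $\Meas$, so one further step --- as in \eqref{mu0} --- is needed to replace it by a $\sigma$-convex combination of elements of $\Meas$.
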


\begin{proof}
Necessity has already been proved. To prove 
sufficiency, consider the collection $D$ of all 
pairs $(M,M')$ of subsets of $\Meas$ with
$M\subset M'$. For given $(M,M'),(N,N')\in D$, 
write $(M,M')\le(N,N')$ whenever $M'\subset N$. 
Since this defines a partial order, consider the 
maximal linearly ordered family 
$\{(M_\alpha,M'_\alpha):\alpha\in\Aa\}
	\subset 
D$ 
such that for each $\alpha\in\Aa$
\tiref{a}
$M_\alpha,M'_\alpha$ are countable and 
\tiref{b}
 there exists 
$0<\nu_\alpha\perp M_\alpha$ and 
$\mu'_\alpha\in M'_\alpha$ such that 
$\nu_\alpha\le\mu'_\alpha$.
If $\Meas$ possesses property \D, then according 
to Proposition \ref{propo D} for each $\alpha\in\Aa$ 
there exists 
$\tau_\alpha
	\in
\SeqDn\A$ 
such that
\begin{align}
\label{s pro}
\widebar\nu_\alpha(\widebar\tau_\alpha^c)
	<
\varepsilon\norm{\nu_\alpha}
\qtext{while}
\sup_{\{\mu\in\Meas:\ \mu\perp\nu_\alpha\}}
\widebar\mu(\widebar\tau_\alpha)
	=
0.
\end{align}
By construction, each $\alpha\in\Aa$ admits
countably  many predecessors, 
$\alpha_1,\alpha_2,\ldots$ and for each of these 
it is possible to construct $\tau_{\alpha_j}\in\SeqDn\A$
as in \eqref{s pro}. We define then 
$\sigma_\alpha\in\SeqDn\A$ by letting
\begin{equation}
\sigma_\alpha(n)
	=
\tau_\alpha(n)
\setminus
\bigcup_{j<n}\tau_{\alpha_j}(k^\alpha_j\vee n)
\qquad
n\in\N
\end{equation}
where, exploiting $\nu_\alpha\perp\nu_{\alpha_j}$, 
$k^\alpha_j$ is chosen so that
\begin{align*}
\sup_{k\ge k^\alpha_j}\nu_\alpha\big(\tau_{\alpha_j}(k)\big)
	<
2^{-j-1}\big[\varepsilon\norm{\nu_\alpha}
-
\widebar\nu_\alpha(\widebar\tau_\alpha^c)\big]
\qquad
j\in\N.
\end{align*}
Notice that
$\sigma_\alpha\le\tau_\alpha$ 
and that 
\begin{equation*}
\sigma_\alpha(n)\cap \tau_{\alpha_j}(n)
	\subset
\tau_{\alpha_j}(n)
\setminus
\tau_{\alpha_j}(k_j\vee n)
	=
0
\qquad
n\ge k^\alpha_j
\end{equation*}
so that
$\sigma_\alpha$ and $\tau_{\alpha_j}$
are quasi disjoint and, {\it a fortiori}, so are 
$\sigma_\alpha$ and $\sigma_{\alpha_j}$.
Moreover,
\begin{align*}
\nu_\alpha\big(\sigma_\alpha(n)^c\big)
	\le
\nu_\alpha\big(\tau_\alpha(n)^c\big)
+
\sum_{j\le n}\nu_\alpha\big(\tau_{\alpha_j}(k^\alpha_j\vee n)\big)
	\le
\nu_\alpha\big(\tau_\alpha(n)^c\big)
+
\frac12\big[\varepsilon\norm{\nu_\alpha}
-
\widebar\nu(\widebar\tau_\alpha^c)\big]
	<
\varepsilon\norm{\nu_\alpha}.
\end{align*}
The collection
$\{\sigma_\alpha:\alpha\in\mathfrak A\}
	\subset
\SeqDn\A$
is thus pairwise quasi disjoint and satisfies 
\eqref{s pro}. By property \D, $\Aa$ must be
countable. Let 
$M
	=
\bigcup_{\alpha\in\Aa}M'_\alpha\in D$.
If one could find $\mu\in\Meas$ and 
$0<\nu\le\mu$ such that $\nu\perp M$,
then the pair $(M,M\cup\{\mu\})$ would
contradict the maximality of 
$\{(M_\alpha,M'_\alpha):\alpha\in\Aa\}$.
Thus each $\mu\in\Meas$ is dominated 
by some $m\in M$ and, {\it a fortiori}, by 
the $\sigma$-convex combination of its 
elements.
\end{proof}

If $\Meas$ is dominated it is then clear
by Lemma \ref{lemma AL} that a dominating
measure is of the form 
\begin{equation}
\label{mu0}
\mu_0
	=
\sum_na_n\mu_n
\qqtext{for some}
\mu_1,\mu_2,\ldots\in\Meas,\ 
a_1,a_2,\ldots\in\R_+.
\end{equation}
Therefore $\nu\perp\Meas$ if and only if 
$\nu\perp\mu_0$. But then for every 
$0<t\le1$ there exists $\sigma\in\SeqDn\A$ 
such that
\begin{equation}
\widebar\nu(\widebar\sigma)
	\ge
(1-t)\norm\nu
\qtext{while}
\widebar\mu_0(\widebar\sigma)
	=
\sup_{\mu\in\Meas}\widebar\mu(\widebar\sigma)
	=
0.
\end{equation}
In other words after Theorem \ref{th D} 
the condition of Proposition \ref{propo D} 
is sufficient for property \D.

\section{Weak compactness in the space 
of additive set functions.}
\label{sec D*}

Let us now consider the case in which $\A$ is
an algebra of subsets of some non empty 
set $\Omega$ and $\Meas\subset ba(\A)$. 
In the special case in which $\Meas$ is norm 
bounded, uniform strong additivity is 
equivalent to relative weak compactness (see 
\cite{brooks}) and implies that $\Meas$ must 
be dominated. This implication is true even 
without norm boundedness.

\begin{corollary}
A uniformly strongly additive set $\Meas$ is  
dominated.
\end{corollary}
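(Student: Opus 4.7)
By Theorem \ref{th D} it suffices to show that $\Meas$ possesses property \D. I would argue by contradiction: suppose there is an uncountable pairwise quasi disjoint family $\{\sigma_\alpha:\alpha\in\Aa\}\subset\SeqDn\A$ with $\sup_{\mu\in\Meas}\widebar\mu(\widebar\sigma_\alpha)>0$ for every $\alpha$. Partitioning $\Aa$ along a countable set of rational thresholds extracts an uncountable subfamily $\Aa'$ on which some common $\eta>0$ works, together with witnesses $\mu_\alpha\in\Meas$ satisfying $\widebar\mu_\alpha(\widebar\sigma_\alpha)>\eta$; by monotonicity of $\mu_\alpha$ on the decreasing $\sigma_\alpha$ this translates into $\mu_\alpha(\sigma_\alpha(n))>\eta$ for every $n\in\N$.

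The plan is to extract from this reservoir a pairwise disjoint sequence $(a_k)\subset\A$ together with witnesses $\nu_k\in\Meas$ satisfying $\nu_k(a_k)>\eta/2$, which is in direct contradiction with uniform strong additivity. Enumerate a countable subfamily $\sigma_k:=\sigma_{\alpha_k}$, $\mu_k:=\mu_{\alpha_k}$ and form the coordinate-wise disjointification $\tau_k(n):=\sigma_k(n)\setminus\bigcup_{j<k}\sigma_j(n)$. At each fixed level $n$ the sets $\{\tau_k(n):k\in\N\}$ are pairwise disjoint in $\A$, and pairwise quasi disjointness of $\{\sigma_k\}$ provides thresholds $N_k$ such that $\tau_k(n)=\sigma_k(n)$ for $n\geq N_k$, hence $\mu_k(\tau_k(n))>\eta$ on this tail. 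The candidate sequence is built inductively by picking $n_k\geq N_k$ and setting $a_k:=\sigma_k(n_k)\setminus\bigcup_{j<k}a_j$, which is automatically pairwise disjoint.

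The hard part is ensuring that $\mu_k(a_k)$ stays above $\eta/2$. The decomposition
\[
a_k\cap a_j\subseteq\sigma_k(n_k)\cap\sigma_j(n_j)
=\sigma_k(n_k)\cap\bigl(\sigma_j(n_j)\setminus\sigma_j(n_k)\bigr),
\]
in which the coordinate-wise intersection $\sigma_k(n_k)\cap\sigma_j(n_k)$ vanishes by choice of $n_k\geq N_k$, reduces the overlap control to bounding $\mu_k(\sigma_j(n_j)\setminus\sigma_j(n_k))$. This residual is non-decreasing in $n_k$, so increasing the level alone will not shrink it: one must instead extract the countable subfamily $(\alpha_k)$ itself by a diagonal procedure from $\Aa'$, arranging at each stage that the new witness $\mu_{\alpha_k}$ places negligible mass on the already-frozen sets $\sigma_{\alpha_j}(n_j)$ for $j<k$. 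Uniform strong additivity, applied to the coordinate-wise disjointified sets at common fixed levels, is precisely what secures the abundance of viable candidates in $\Aa'$ required to close this diagonal construction and produce the pairwise disjoint sequence witnessing the contradiction.
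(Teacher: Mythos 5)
Your overall strategy is the paper's own: reduce to property \D\ via Theorem \ref{th D}, extract a countable quasi disjoint subfamily with a common threshold $\eta$ and witnesses $\mu_k$ satisfying $\abs{\mu_k}(\sigma_k(n))>\eta$ for every $n$ (note that in this section the elements of $\Meas$ may be signed, so the variations $\abs{\mu_k}$ should be used throughout), and manufacture from it a genuinely pairwise disjoint sequence in $\A$ violating uniform strong additivity. You have also correctly located the one real difficulty, which the paper hides behind the word ``accurately'': with $a_k=\sigma_k(n_k)\setminus\bigcup_{j<k}\sigma_j(n_j)$ disjointness is free, but once $n_k$ exceeds the quasi disjointness thresholds the mass lost at stage $k$ is controlled by the ``annuli'' $\sigma_j(n_j)\setminus\sigma_j(n_k)$, which only grow as $n_k$ grows.

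The gap is in your resolution of this difficulty. You propose to re-run the selection of the countable subfamily as a diagonal over the uncountable reservoir $\Aa'$, arranging that each new witness puts negligible mass on the frozen sets, and you assert that uniform strong additivity applied to the disjointified sets at a fixed level secures enough viable candidates. That assertion is not proved, and I do not see how to prove it: if the diagonal gets stuck at stage $k$, what you learn is that every remaining witness $\mu_\alpha$ gives $\sigma_\alpha(n)\cap U_k$ mass at least $\eta/2$ for all $n$, where $U_k$ is the fixed finite union of frozen sets; that is merely another instance of the failure of property \D, localized below $U_k$, and it contradicts uniform strong additivity no more directly than the original hypothesis did. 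The repair does not touch the subfamily at all; it lies in the choice of the levels $n_j$, and this is where uniform strong additivity must actually be invoked. For each fixed decreasing $\sigma_j$ one has
\[
\lim_n\,\sup_{m>n}\,\sup_{\mu\in\Meas}\abs\mu\big(\sigma_j(n)\setminus\sigma_j(m)\big)=0,
\]
since otherwise a sliding-hump extraction $n_1<m_1\le n_2<m_2\le\cdots$ yields pairwise disjoint sets $\sigma_j(n_i)\setminus\sigma_j(m_i)$ on which $\sup_{\mu\in\Meas}\abs\mu$ stays bounded away from $0$. Choosing $n_j$ so large that this supremum falls below $\eta2^{-j-2}$, in addition to the quasi disjointness and monotonicity requirements, makes the loss at stage $k$ bounded by $\sum_{j<k}\eta2^{-j-2}<\eta/4$ uniformly over whatever $\mu_k$ and $n_k$ appear later, and the induction closes with $\abs{\mu_k}(a_k)>\eta/2$. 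With that lemma inserted in place of the diagonal over $\Aa'$, your argument is complete and coincides with what the paper's proof intends.
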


\begin{proof}
Suppose that $\Meas$ fails to possess property
\D. Then it is possible to find $\eta>0$ and a 
pairwise quasi disjoint sequence $\seq\sigma k$ 
in $\SeqDn\A$ such that
$\inf_k\sup_{\mu\in\Meas}
\widebar{\abs\mu}(\widebar\sigma_k)
	>
\eta$. By picking $B_k$ from the sequence $\sigma_k$ 
for each $k\in\N$ accurately we can then form a pairwise 
disjoint sequence $\seq Bk$ such that 
$
\inf_k\sup_{\mu\in\Meas}
\abs\mu(B_k)
	>
\eta
$
so that uniform strong additivity fails.
\end{proof}

The connection between property \D\ and
weak compactness is made precise in the 
following:

\begin{theorem}
\label{th D*}
$\Meas$ is relatively weakly compact if and only
norm bounded and of class \Du.
\end{theorem}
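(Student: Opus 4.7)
The plan is to establish each implication using the classical characterization of weak compactness in $ba(\A)$ due to Brooks \cite{brooks}: a norm bounded family is relatively weakly compact if and only if it is uniformly strongly additive.

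For \emph{necessity}, suppose $\Meas$ is relatively weakly compact. Norm boundedness is immediate, and uniform strong additivity follows from Brooks. To derive \Du\ I argue by contraposition along the lines of the preceding Corollary: given an uncountable pairwise quasi-disjoint family $\{\sigma_\alpha\}\subset\SeqDn\A$ satisfying \eqref{D*}, pass to a countable subfamily $(\sigma_k)$ sharing a common positive lower bound $\eta$. Because each $\sigma_k$ is decreasing, the map $n\mapsto\sup_\mu\mu(\sigma_k(n))$ is nonincreasing, so $\sup_\mu\mu(\sigma_k(n))>\eta$ for $n$ sufficiently large. Exploiting pairwise quasi-disjointness I select indices $n_k$ inductively so that the sets $B_k=\sigma_k(n_k)$ are pairwise disjoint while $\sup_\mu\mu(B_k)>\eta$; this contradicts uniform strong additivity.

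For \emph{sufficiency}, assume $\Meas$ is norm bounded and of class \Du. Since \Du\ implies \D, Theorem \ref{th D} furnishes a dominating measure $\mu_0$ of the form \eqref{mu0}. By Brooks's theorem it suffices to establish uniform strong additivity, and I argue by contradiction: assume there exist $\eta>0$, a disjoint sequence $(B_n)\subset\A$, and $\mu_n\in\Meas$ with $\mu_n(B_n)>\eta$. Domination forces $\mu_0(B_n)\to 0$, and after thinning I may assume $\mu_0(B_n)<2^{-n}$. The task reduces to manufacturing from $(B_n)$ an uncountable pairwise quasi-disjoint family $\{\sigma_i:i\in I\}\subset\SeqDn\A$ with $\lim_n\sup_\mu\mu(\sigma_i(n))\ge\eta$, in contradiction with \Du.

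The main obstacle is precisely this last construction. Because $\A$ admits only finite Boolean operations, the naive candidate $\bigcup_{k\in T_i,\,k\ge n}B_k$, indexed by the uncountable almost-disjoint family $\{T_i:i\in I\}\subset\PN$ of Example \ref{ex N}, need not lie in $\A$. My strategy is to work inside the sequence algebra $\Sigma(\A,\mu_0)$: for each $T_i$ I would construct $\widebar\sigma_i\in\Sigma^\downarrow(\A,\mu_0)$ by a diagonal selection of finite unions of tails of $(B_k)_{k\in T_i}$, then invoke Lemma \ref{lemma Sn} to replace $\widebar\sigma_i$ by a genuine element of $\Sigma^\downarrow(\A)$ within arbitrarily small $\mu_0$-mass. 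Almost-disjointness of $T_i$ and $T_j$ combined with disjointness of the $B_k$ should force $\widebar\sigma_i\wedge\widebar\sigma_j=0$, while the persistence of $B_k$ with $k\in T_i$ in the tail of $\sigma_i$, together with $\sup_\mu\mu(B_k)>\eta$, preserves the required lower bound on $\lim_n\sup_\mu\mu(\sigma_i(n))$.
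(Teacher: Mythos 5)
The sufficiency direction has a genuine gap, and it sits exactly where you place your ``main obstacle''. Property \Du\ requires the uncountable family to consist of decreasing sequences \emph{in $\A$} that are quasi disjoint, i.e.\ $\sigma_i(n)\cap\sigma_j(n)=0$ \emph{exactly}, for $n$ large. A term $\sigma_i(n)\in\A$ that is to carry mass at least $\eta$ at every later stage must, by monotonicity, contain the whole infinite tail $\bigcup_{k\in T_i,\,k\ge n}B_k$; since $\A$ admits only finite unions there is no smallest such element, and two elements of $\A$ containing the (disjoint) tails attached to $T_i$ and $T_j$ may well meet in a nonzero element. Your proposed repair --- approximating inside $\Sigma(\A,\mu_0)$ and invoking Lemma \ref{lemma Sn} within small $\mu_0$-mass --- cannot close this: smallness in $\mu_0$-measure never produces the exact eventual disjointness that quasi disjointness demands, and since the domination $\mu\ll\mu_0$ furnished by Theorem \ref{th D} is not uniform over $\Meas$, trimming sets of small $\mu_0$-mass need not preserve the bound $\lim_n\sup_\mu\mu(\sigma_i(n))\ge\eta$ either. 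The paper's device is different and is the heart of the proof: extend each $\abs\mu$ to the generated $\sigma$-algebra $\sigma\A$ by the inner set function $\abs\mu_*$, show by inner approximation with decreasing sequences from $\A$ that $\{\abs\mu_*:\mu\in\Meas\}$ inherits property \Du\ on $\sigma\A$, and only then run the almost-disjoint-family argument of Example \ref{ex N} on the genuine infinite unions $\upsilon(E)=\bigcup_{k\in E}\upsilon(k)\in\sigma\A$, where exact disjointness and monotonicity are automatic. Theorem \ref{th D} and the measure $\mu_0$ are never needed here.

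Your necessity argument also needs repair, though a minor one. Quasi disjointness controls intersections only at a \emph{common} index: $\sigma_j(n)\cap\sigma_k(n)=0$ for $n\ge N_{jk}$ says nothing about $\sigma_j(n_j)\cap\sigma_k(n_k)$ when $n_j<n_k$, because $\sigma_j(n_j)\supset\sigma_j(n_k)$; so ``select $n_k$ inductively so that the $B_k=\sigma_k(n_k)$ are pairwise disjoint'' does not work as stated. What you do get for free is, for every $m$, a family of $m$ pairwise disjoint sets each with $\sup_\mu\abs\mu>\eta$ (evaluate the first $m$ sequences at one sufficiently large common index), and to turn arbitrarily large finite disjoint families into a contradiction you should invoke the uniform control measure $\nu$ that relative weak compactness provides, which bounds the cardinality of any such family. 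The paper's necessity proof is in fact shorter still: $\{\nu\}$ is trivially of class \Du\ and uniform domination transfers the property to $\Meas$.
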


\begin{proof}
The set $\{\nu\}$ is trivially of class \Du. If
$\nu$ dominates $\Meas$ uniformly, then 
$\Meas$ is of class \Du. Thus relative weak 
compactness implies property \Du.

To prove the converse, denote by $\sigma\A$
the $\sigma$ algebra generated by $\A$ and,
if $m\in ba(\A)_+$, by $m_*$ the set function
on $\sigma\A$ defined by 
\begin{equation}
m_*(B)
=
\sup_{\{A\in\A:A\subset B\}}m(A)
\qquad
B\in\sigma\A.
\end{equation}
We first show that the collection 
$\{\abs\mu_*:\mu\in\Meas\}$
itself possesses property \Du. In fact, if 
$\{\sigma_\alpha:\alpha\in\Aa\}$
is a pairwise quasi disjoint family in 
$\SeqDn{\sigma\A}$ such that
\begin{equation*}
\lim_n\sup_{\mu\in\Meas}\abs
\mu_*(\sigma_\alpha(n))
	>
0
\qquad
\alpha\in\Aa
\end{equation*}
for each $\varepsilon>0$, $\alpha\in\Aa$ and 
$n\in\N$ we can find $\tau_\alpha(n)\in\A$ such 
that $\tau_\alpha(n)\subset\sigma_\alpha(n)$
and 
$\sup_{\mu\in\Meas}
\abs\mu_*\big(\sigma_\alpha(n)\setminus\tau_\alpha(n)\big)
\le
\varepsilon2^{-n}$. Let
$\tau'_\alpha(n)=\bigcap_{j=1}^n\tau_\alpha(j)$.
Then $\tau'_\alpha\in\SeqDn\A$, 
$\tau'_\alpha\le\sigma_\alpha$
and
\begin{align*}
\sup_{\mu\in\Meas}
\abs\mu\big(\tau'_\alpha(n)\big)
	&\ge
\sup_{\mu\in\Meas}
\abs\mu\Big(\bigcap_{j=2}^n\tau_\alpha(j)\Big)
-
\sup_{\mu\in\Meas}
\abs\mu\Big(\bigcap_{j=2}^n\tau_\alpha(j)
\setminus\tau_\alpha(1)\Big)
\\
	&\ge
\sup_{\mu\in\Meas}
\abs\mu\Big(\bigcap_{j=2}^n\tau_\alpha(j)\Big)
-
\sup_{\mu\in\Meas}
\abs\mu_*\big(\sigma_\alpha(1)\setminus\tau_\alpha(1)\big)
\\
	&\ge
\sup_{\mu\in\Meas}
\abs\mu\Big(\bigcap_{j=2}^n\tau_\alpha(j)\Big)
-\varepsilon 2^{-1}
\\
	&\ge
\sup_{\mu\in\Meas}
\abs\mu_*\big(\sigma_\alpha(n)\big)
-\varepsilon \sum_{j=1}^n2^{-j}.
\end{align*}
This shows that $\{\tau'_\alpha:\alpha\in\Aa\}$
forms a pairwise quasi disjoint family of decreasing 
sequences that satisfies the condition
$
\lim_n\sup_{\mu\in\Meas}\abs
\mu\big(\tau'_\alpha(n)\big)
	>
0
$
for each $\alpha\in\Aa$. By property \Du, 
$\Aa$ must then be countable, thus proving
the preceding claim.

Take a disjoint sequence $\upsilon\in\Seq\A$
and define
\begin{equation}
\upsilon(E)=\bigcup_{k\in E}\upsilon(k)
\qtext{and}
\psi(E)
	=
\sup_{\mu\in\Meas}\abs
\mu_*\big(\upsilon(E)\big)
\qquad
E\subset\N.
\end{equation}
Given that $\Meas$ is norm bounded, that the 
sequence is disjoint and that
$\upsilon(E)\in\sigma\A$ 
we conclude that $\psi:\mathcal P(\N)\to\R_+$
is monotone, $\psi(\emp)=0$ and that $\psi$ is 
of class \D. As shown in Example \ref{ex N}, 
there exists an infinite set $E_0\subset\N$ 
such that, letting $E_n=E_0\cap\{n,n+1,\ldots\}$,
\begin{equation}
\lim_n\psi(E_n)
	=
\lim_n\sup_{\mu\in\Meas}
\abs\mu_*\big(\upsilon(E_n)\big)
	=
0.
\end{equation}
Upon passing to a subsequence if necessary,
we can assume the existence of 
$i_n\in E_n\setminus E_{n+1}$.
Then,
\begin{align*}
\lim_n\sup_{\mu\in\Meas}\abs\mu(\upsilon(i_n))
	\le
\lim_n\psi(E_n)
	=
0.
\end{align*}
This rules out the possibility that
$\limsup_k\sup_{\mu\in\Meas}\abs\mu(\upsilon(n))
>
0$
and proves that $\Meas$ is uniformly strongly
additive and thus relatively weakly compact.
\end{proof}

We deduce easily the following special version 
of a result of Zhang \cite[Theorem 1.3]{zhang}.
In this claim it is essential to take $\Meas$ to
consist of positive set functions.

\begin{corollary}
\label{cor D*}
A weakly$^*$ compact set $\Meas\subset ba(\A)_+$ 
is weakly compact if and only if of class \D.
\end{corollary}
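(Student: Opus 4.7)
The plan is to reduce to Theorem \ref{th D*}, which identifies relative weak compactness with the conjunction of norm boundedness and property \Du. Weak$^*$ compactness of $\Meas$ automatically supplies norm boundedness (since $ba(\A)$ is the dual of $B(\A)$, the space of bounded $\A$-measurable functions) and also bridges ``relative weak compactness'' and ``weak compactness'': the weak topology is finer than the weak$^*$, so the weak closure of $\Meas$ is contained in its weak$^*$ closure, which is $\Meas$ itself. Thus everything reduces to proving that, in the presence of weak$^*$ compactness and positivity, properties \D\ and \Du\ coincide.

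The direction \Du\ $\Rightarrow$ \D\ is immediate and does not use weak$^*$ compactness: for a decreasing sequence of positive measures the $\limsup$ in \eqref{bar m} is a genuine limit, so condition \eqref{D} (a ``$\sup\lim$'') trivially implies condition \eqref{D*} (a ``$\lim\sup$''), and every pairwise quasi disjoint family witnessing a failure of \D\ witnesses a failure of \Du\ as well.

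The opposite direction is the nontrivial one. Take a pairwise quasi disjoint family $\{\sigma_\alpha:\alpha\in\Aa\}\subset\SeqDn\A$ satisfying \eqref{D*}, and let $c_\alpha=\lim_n\sup_{\mu\in\Meas}\mu(\sigma_\alpha(n))>0$, the decreasing limit being an infimum. For each $n$ pick $\mu^\alpha_n\in\Meas$ with $\mu^\alpha_n(\sigma_\alpha(n))\ge c_\alpha$ and, using weak$^*$ compactness of $\Meas$, extract a weak$^*$ cluster point $\mu^\alpha\in\Meas$ of the sequence $\sseqn{\mu^\alpha_n}$. By positivity and monotonicity of $\sigma_\alpha$ one has $\mu^\alpha_n(\sigma_\alpha(k))\ge\mu^\alpha_n(\sigma_\alpha(n))\ge c_\alpha$ whenever $n\ge k$, and since $\nu\mapsto\nu(\sigma_\alpha(k))$ is weak$^*$-continuous the weak$^*$-closed slice $\{\nu\in\Meas:\nu(\sigma_\alpha(k))\ge c_\alpha\}$ must contain the cluster point, giving $\mu^\alpha(\sigma_\alpha(k))\ge c_\alpha$ for every $k$. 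Letting $k\to\infty$ yields $\widebar{\mu^\alpha}(\widebar\sigma_\alpha)\ge c_\alpha>0$, i.e. condition \eqref{D} holds; property \D\ then forces $\Aa$ countable. The main obstacle is precisely this last step: in general the ``$\sup\lim$'' of \eqref{D} is strictly smaller than the ``$\lim\sup$'' of \eqref{D*}, and only the combination of weak$^*$ compactness (supplying the cluster point) with positivity (which transfers the lower bound $c_\alpha$ from $\sigma_\alpha(n)$ to every $\sigma_\alpha(k)$ with $k\le n$) makes the inversion possible. This is exactly where the hypothesis $\Meas\subset ba(\A)_+$ cannot be dropped.
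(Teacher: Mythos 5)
Your proof is correct and follows essentially the same route as the paper: reduce to Theorem \ref{th D*} by noting that a weak$^*$ compact set is norm bounded and weakly closed, and then show that properties \D\ and \Du\ coincide for weak$^*$ compact subsets of $ba(\A)_+$. The only difference is in the justification of the key interchange $\lim_n\sup_{\mu\in\Meas}\mu(\sigma_\alpha(n))>0\Rightarrow\sup_{\mu\in\Meas}\lim_n\mu(\sigma_\alpha(n))>0$: the paper cites Dini's theorem for the decreasing sequence of weak$^*$ continuous functions $\mu\mapsto\mu(\sigma_\alpha(n))$ on the weak$^*$ compact set $\Meas$, while you prove the same fact directly by extracting a weak$^*$ cluster point of near-maximizers, which is an equivalent argument.
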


\begin{proof}
If $\Meas$ is weakly$^*$ compact it is then
weakly closed and bounded. By Theorem
\ref{th D*} it remains to prove that is of class 
\Du. But for a weakly$^*$ compact set
of positive, additive set functions this is 
equivalent to property \D, by virtue of Dini's 
Theorem.
\end{proof}

\section{Relation with the literature}
\label{sec close}

When $\Meas$ is the set of all measures
on $\A$, properties \D\ and \CC\ are rightfully 
interpreted as properties of the algebra $\A$. 
Given that each element of $\A$ other than $0$
is assigned positive mass by some measure, 
property \D\ is sufficient to imply the existence 
of a set function that vanishes only on $0$, i.e. 
that $\A$ is a measure algebra. Maharam 
conjectured that the \CC\ condition may 
possibly be sufficient for a Boolean algebra 
to be measure algebra (see also 
\cite[Theorem 2.4]{horn_tarski}). Gaifman 
\cite{gaifman} later constructed an example of 
a Boolean algebra satisfying the \CC\ condition 
but failing to be a measure algebra. Quite 
recently, Talagrand \cite{talagrand} provided 
an example of a Boolean $\sigma$ algebra 
satisfying the \CC\ property and the so-called 
weak distributive law but which is not a measure 
algebra. A necessary and sufficient condition 
has been given by Kelley \cite{kelley}.

We can show a special case of a fairly general
Boolean algebra in which the \CC\ property
is necessary and sufficient to be a measure 
algebra.

\begin{theorem}
\label{th Gaifman}
Let $\A$ be a Boolean algebra. $\Sigma(\A)$ 
is a measure algebra if and only if it possesses 
property \CC.
\end{theorem}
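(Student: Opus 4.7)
\textit{Necessity} is immediate: if $\nu$ is a strictly positive probability measure on $\Sigma(\A)$, then any pairwise disjoint family $\{x_\alpha\}$ of nonzero elements satisfies $\sum_\alpha \nu(x_\alpha) \le 1$ with every summand positive, so the index set is countable.

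For \textit{sufficiency}, assume \CC\ on $\Sigma(\A)$ and let $\Meas$ be the set of all measures on $\A$. Identifying $\A$ with the clopen subsets of its Stone space, every nonzero $\widebar\sigma \in \Sigma^{\downarrow}(\A)$ picks out a decreasing sequence of nonempty clopens whose intersection contains some $p$ by compactness, and then $\widebar{\delta_p}(\widebar\sigma) = 1$. A pairwise quasi-disjoint family $\{\sigma_\alpha\} \subset \SeqDn\A$ with $\sup_{\mu \in \Meas} \widebar\mu(\widebar{\sigma_\alpha}) > 0$ therefore yields a pairwise disjoint family of nonzero elements of $\Sigma^{\downarrow}(\A) \subseteq \Sigma(\A)$, which \CC\ forces to be countable. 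So $\Meas$ has property \D, and by Theorem \ref{th D} it is dominated by some $\mu_0$ on $\A$; the induced $\widebar{\mu_0}$ is additive on $\Sigma(\A)$, and by domination together with the Dirac observation it is strictly positive on $\Sigma^{\downarrow}(\A) \setminus \{0\}$.

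The central step is to extend strict positivity from $\Sigma^{\downarrow}(\A)$ to all of $\Sigma(\A)$. Since $\Sigma^{\downarrow}(\A)$ is closed under meet and join, every element of $\Sigma(\A)$ has a disjoint normal form $\bigvee_{i}(a_i \setminus b_i)$ with $a_i \ge b_i$ in $\Sigma^{\downarrow}(\A)$, so the task reduces to showing that $a > b$ in $\Sigma^{\downarrow}(\A)$ implies $\widebar{\mu_0}(a) > \widebar{\mu_0}(b)$. Suppose not, and write $a = \widebar\sigma$, $b = \widebar\tau$ with $\sigma \ge \tau$ decreasing. By domination, $\mu(\sigma(n) \setminus \tau(n)) \to 0$ for \emph{every} measure $\mu$ on $\A$, while $\sigma(n) \setminus \tau(n) \ne 0$ for infinitely many $n$. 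I plan to use this pointwise weak$^*$ null but not eventually zero sequence, together with an uncountable almost-disjoint family $\{E_\gamma\}$ of infinite subsets of $\{n : \sigma(n) \setminus \tau(n) \ne 0\}$, to manufacture inside $\Sigma(\A)$ an uncountable pairwise disjoint family of nonzero elements, contradicting \CC. A natural candidate is $z_\gamma = \widebar{\upsilon_\gamma} \setminus \widebar\tau$, where $\upsilon_\gamma(n) = \sigma(n_\gamma) \cup \tau(n)$ with $n_\gamma = \min\{k \in E_\gamma : k \ge n\}$; this is a decreasing element of $\SeqDn\A$ with $\tau \le \upsilon_\gamma \le \sigma$, and $\widebar{\upsilon_\gamma} \setminus \widebar\tau$ is witnessed nonzero by the fact that $\upsilon_\gamma(n)\setminus\tau(n) = \sigma(n)\setminus\tau(n)$ for every $n \in E_\gamma$.

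The principal obstacle is to verify pairwise disjointness of the $z_\gamma$ in $\Sigma(\A)$: the naive masking $\rho \cdot 1_{E_\gamma}$ of $\rho(n) = \sigma(n) \setminus \tau(n)$ generally falls outside $\Sigma(\A)$, since $0$--$1$ sequences indexed by arbitrary $E_\gamma$ are not Boolean combinations of decreasing $\A$-valued sequences, so the construction must stay within the algebra generated by $\Sigma^{\downarrow}(\A)$. A direct componentwise computation leaves the residue $\sigma(\max(n_\gamma,n_{\gamma'})) \setminus \tau(n)$, which a priori need not vanish, so the candidate sequences $\upsilon_\gamma$ must be refined---exploiting the finiteness of $E_\gamma \cap E_{\gamma'}$ together with the pointwise null character of $\rho(n)$ in the Stone space---to drive the overlaps into $\Seq[0]\A$. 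Once this combinatorial verification is in place, the resulting uncountable antichain contradicts \CC, forcing $\widebar{\mu_0}(a) > \widebar{\mu_0}(b)$ and hence strict positivity of $\widebar{\mu_0}$ on $\Sigma(\A)\setminus\{0\}$, so that $\Sigma(\A)$ is a measure algebra.
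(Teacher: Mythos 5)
Your reduction to Theorem \ref{th D} is sound and coincides with the paper's: pairwise quasi disjoint families in $\SeqDn\A$ with positive sup-measure become disjoint nonzero elements of $\Sigma(\A)$ (your Stone-space/Dirac argument for the equivalence is exactly right), so \CC\ yields property \D\ for the family of all measures, hence a dominating $\mu_0$ with $\widebar\mu_0$ additive on $\Sigma(\A)$ and strictly positive on $\Sigma^{\downarrow}(\A)\setminus\{0\}$. You are also right that passing from $\Sigma^{\downarrow}(\A)$ to $\Sigma(\A)$ is a genuine issue and not a formality: a nonzero difference $\widebar\sigma\sim\widebar\tau$ with $\sigma\ge\tau$ in $\SeqDn\A$ can be annihilated by \emph{every} measure (take $\A$ the finite--cofinite algebra on $\N$, $\sigma(n)=\{n,n+1,\dots\}$, $\tau(n)=\{n+1,n+2,\dots\}$), so strict positivity on the generating lattice does not transfer for free; the paper's proof passes over this point, asserting directly that $\widebar m(\widebar\sigma)=0$ for all $m$ forces $\widebar\sigma=0$. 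So you have located the real content of the theorem. But your proof stops exactly there: you concede that your candidate $z_\gamma$ leaves the residue $\sigma(\max(n_\gamma,n_{\gamma'}))\setminus\tau(n)$, which indeed need not vanish (take $\tau=0$ and any strictly decreasing $\sigma$), and the promised refinement is never carried out. As written this is a gap, not a proof.

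The step can be closed, and more cheaply than the masking strategy you sketch. From $\lim_n\mu(\sigma(n)\setminus\tau(n))=0$ for every measure $\mu$ --- Dirac measures suffice, via compactness of the Stone space --- one gets: for every $N$ there is $n$ with $\sigma(n)\le\tau(N)$ (otherwise $\sigma(n)\setminus\tau(N)$ is a decreasing sequence of nonzero elements, and a point $p$ in all of them gives $\delta_p(\sigma(n)\setminus\tau(n))=1$ for $n\ge N$). Combining this with the infinitude of $E=\{n:\sigma(n)\setminus\tau(n)\ne0\}$, choose $n_1<n_2<\cdots$ in $E$ with $\sigma(n_{k+1})\le\tau(n_k)$; then the elements $c_k=\sigma(n_k)\setminus\tau(n_k)$ are nonzero and pairwise disjoint, since $c_j\le\sigma(n_j)\le\tau(n_k)$ for $j>k$ while $c_k\cap\tau(n_k)=0$. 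Now for each member $E_\gamma=\{e^\gamma_1<e^\gamma_2<\cdots\}$ of an uncountable almost disjoint family of infinite subsets of $\N$, put $w_\gamma(i)=\sigma(n_{e^\gamma_i})\setminus\tau(n_{e^\gamma_i})=c_{e^\gamma_i}$; this is the difference of two decreasing sequences, hence lies in $\Sigma(\A)$, it is nonzero in $\FacSeq\A$, and $w_\gamma(i)\cap w_{\gamma'}(i)=0$ except for the finitely many $i$ with $e^\gamma_i=e^{\gamma'}_i$. The resulting uncountable antichain contradicts \CC, which is exactly what your argument needs to conclude.
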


\begin{proof}
Measure algebras possess the \CC\ property. 
Conversely, if $\{\sigma_\alpha:\alpha\in\Aa\}$
is a pairwise quasi disjoint family then 
$\{\widebar\sigma:\alpha\in\Aa\}$ is disjoint 
in $\Sigma(\A)$; moreover
$\sup\widebar m(\widebar\sigma_\alpha)
	>
0$,
the supremum being over all measures on $\A$,
is equivalent to $\widebar\sigma_\alpha\ne 0$ in 
$\Sigma(\A)$. If $\Sigma(\A)$ satisfies the \CC\ 
property, $\Aa$ must be countable so that $\A$
has property \D\ and the family of all measures 
on $\A$ is dominated by some $\mu_0$, by 
Theorem \ref{th D}. Then, 
$\widebar\mu_0(\widebar\sigma)=0$ if and 
only if $\widebar m(\widebar\sigma)=0$ for
all measures $m$ on $\A$, i.e. if $\widebar\sigma=0$.
\end{proof}

\end{document}